\pgfplotsset{compat=newest}
\newtheorem{theorem}{Theorem}[section]
\newtheorem{example}{Example}[section]
\newcommand{\N}{\mathbb{N}}
\newcommand{\Z}{\mathbb{Z}}
\newcommand{\R}{\mathbb{R}}
\newcommand{\C}{\mathbb{C}}
\newcommand{\grad}{\nabla}
\begin{document}

\begin{flushleft}
\Large 
\noindent{\bf \Large  Two direct sampling methods for an anisotropic scatterer with a conductive boundary}
\end{flushleft}

\vspace{0.2in}
{\bf  \large Victor Hughes and Isaac Harris}\\
\indent {\small Department of Mathematics, Purdue University, West Lafayette, IN 47907 }\\
\indent {\small Email: \texttt{vhughes@purdue.edu}, \texttt{harri814@purdue.edu}\\

{\bf  \large Andreas Kleefeld}\\
\indent {\small Forschungszentrum J\"{u}lich GmbH, J\"{u}lich Supercomputing Centre, } \\
\indent {\small Wilhelm-Johnen-Stra{\ss}e, 52425 J\"{u}lich, Germany}\\
\indent {\small University of Applied Sciences Aachen, Faculty of Medical Engineering and } \\
\indent {\small Technomathematics, Heinrich-Mu\ss{}mann-Str. 1, 52428 J\"{u}lich, Germany}\\
\indent {\small Email: \texttt{a.kleefeld@fz-juelich.de}}\\

\begin{abstract}
\noindent In this paper, we consider the inverse scattering problem associated with an anisotropic medium with a conductive boundary condition. We will assume that the corresponding far--field pattern or Cauchy data is either known or measured. The conductive boundary condition models a thin coating around the boundary of the scatterer. We will develop two direct sampling methods to solve the inverse shape problem by numerically recovering the scatterer. To this end, we study direct sampling methods by deriving that the corresponding imaging functionals decay as the sampling point moves away from the scatterer. These methods have been applied to other inverse shape problems, but this is the first time they will be applied to an anisotropic scatterer with a conductive boundary condition. These methods allow one to recover the scatterer by considering an inner--product of the far--field data or the Cauchy data. Here, we will assume that the  Cauchy data is known on the boundary of a region $\Omega$ that completely encloses the scatterer $D$. We present numerical reconstructions in two dimensions to validate our theoretical results for both circular and non-circular scatterers.
\end{abstract}

\section{Introduction} \label{aicbc-section-intro}
We will study the {\it inverse shape problem} of recovering an unknown scatterer from measured far--field pattern or Cauchy data. For this inverse shape problem, we analyze the multiple direct sampling methods for reconstructing an anisotropic medium with a conductive boundary condition. We study the application of these direct sampling methods for an anisotropic material with a conductive boundary on an unbounded domain. Similar techniques have been used for other scattering problems, but this problem requires original analysis to complete. Here, the conductive boundary condition is modeled by a Robin condition that states that the total field has a jump across the boundary of the scatterer which is proportional to the co-normal derivative of the total field. The conductive boundary condition models a thin coating around the boundary of the scatterer. We remark that this is the same model as presented in \cite{new-conductive-model}. Precisely, in \cite{new-conductive-model}, the factorization method for recovering the scatterer from far--field measurements has been studied. Here, we study direct sampling methods to recover the scatterer which are more stable with respect to noisy data than the factorization method. Throughout our investigation, we need to derive a new factorization of the far--field operator. This factorization is based on the variational formulation of the scattering problem whereas in \cite{new-conductive-model} the factorization was based on integral operators. 

One of our main contributions is to extend the work performed in \cite{new-conductive-model} (see \cite{fmconductbc} for the isotropic case). This is accomplished by considering a more general model to what is studied in \cite{new-conductive-model}. In this paper, we will assume that there is an anisotropic coefficient as well as a non--trivial refractive index. Since the previous works did not consider a scatterer with a refractive index, the analysis presented here gives the factorization of the far--field operator with both the anisotropic coefficient and refractive index. Direct sampling type methods have been studied for many years \cite{DSM-DOT,DSM-EIT,multifreq-dsm,harris-Biharmonic-dsm,harris-nguyen,Ito2012,Ito2013}. The work in \cite{Liu} was the first case of analysis associated with the factorization method being used to analyze a direct sampling method. This idea was further developed in \cite{harris-dsm,harris-nguyen} where the authors gave explicit estimates that imply that the imaging functional decays (rapidly) as the sampling point moves away from the scatterer. This implies that one can simply plot the imaging functional which is defined via an inner--product of the data and specific test function(s). This provides a computationally stable method for approximating the shape of the scatterer with little a priori information.

Another contribution is to extend the applicability of the imaging functional studied in \cite{liem-paper-rgf} (see also \cite{liem2})  for the case of a medium with a conductive boundary and non--trivial refractive index. The direct sampling method used in \cite{liem-paper-rgf} uses the reciprocity gap functional to derive a computationally simple yet analytically rigorous imaging functional. The reciprocity gap functional has been used to solve many inverse shape problems, for e.g. \cite{Govanni-DSM,multifreq-dsm,dsm-maxwel-layer,RTM-gibc}. Recently in \cite{liem2,Govanni-DSM}, direct sampling methods derived from the reciprocity gap functional have been studied to give fast and accurate shape reconstructions. Here, for the case of far--field or Cauchy data, we will assume that we have full--aperture data, but in \cite{ayalaharriskleefeldex2024,Kang1,Kang2} the researchers have studied how to deal with partial--aperture data.

The rest of the paper is organized as follows: in Section \ref{acbc2-section-formulation}, we formulate the direct and inverse scattering problem for the given anisotropic material with a conductive boundary. Next, in Section \ref{aicbc-section-farfield}, we decompose the far--field operator into a symmetric factorization. This is necessary for the use of the direct sampling method via the far--field data. We also provide an explanation on the application of the far--field direct sampling method for this problem and its associated imaging functional. In Section \ref{aicbc-section-cauchy}, we derive a direct sampling method using the Cauchy data by studying the reciprocity gap functional for the given material. Finally, in Section \ref{aicbc-section-numerics}, we provide numerical examples in two dimensions for various scatterers to validate our theoretical results.

\section{Formulation of the Problem} \label{acbc2-section-formulation}
We start by discussing the direct scattering problem for the anisotropic material. We let $D \subset \mathbb{R}^d$ for $d=$ 2 or 3 be a simply connected open set with Lipschitz boundary $\partial D$, where $\nu$ represents the unit outward normal vector. The region $D$ denotes the (unknown) scatterer that we illuminate with an incident plane wave $u^i:=\text{e} ^{\text{i} kx\cdot\hat{y}}$ such that $k>0$ is the wave number, $x\in \mathbb{R}^d$, and $\hat{y} \in \mathbb{S}^{d-1}$ denotes the incident direction. $\mathbb{S}^{d-1}$ is defined to be the unit ball in $d-1$ dimensions, i.e. $\mathbb{S}^{d-1}:=\{x\in\mathbb{R}^d : |x|=1 \}$. We assume that we have a symmetric matrix--valued function $A(x) \in \mathscr{C}^1 (\overline{D}, \mathbb{C}^{d\times d})$ that satisfies
\begin{center}
    $ \overline{\xi} \cdot \text{Re}(A(x))\xi \geq A_{\text{min}}|\xi|^2 $ \quad and \quad $\overline{\xi} \cdot \text{Im}(A(x))\xi \leq 0$ \quad for a.e. $x \in D$ and $\xi \in \mathbb{C}^d$
\end{center}
where $A_{\text{min}}$ is a positive constant. We also assume that the refractive index $n(x) \in L^\infty (\mathbb{R}^d)$ satisfies
\begin{center}
    $\text{Im}(n(x)) \geq 0$ \quad for a.e. $x \in D$.
\end{center}
Outside of the scatterer $D$, the material parameters $A(x)$ and $n(x)$ are homogeneous and isotropic such that 
$$A(x)=I \quad \text{and} \quad n(x)=1\quad \text{for }x \in \R^d \setminus \overline{D}. $$
To model the thin conductive layer on the boundary, we consider the real--valued conductivity parameter $\eta \in L^\infty(\partial D)$, with the condition
\begin{center}
    $\eta(x) \geq \eta_{\text{min}} > 0$ \quad for a.e. $x \in \partial D$.
\end{center}

With the above assumptions, we have that the direct scattering problem for an anisotropic material with a conductive boundary condition is formulated as follows: find $u^s \in H^1_{loc}(\mathbb{R}^d \setminus \partial D)$ such that
\begin{align}
 \Delta u^s + k^2u^s = 0 \quad \text{in $\mathbb{R}^d \setminus \overline{D}$} \quad \text{ and } \quad  \nabla \cdot A\nabla u+k^2nu=0 \quad &\text{in $D$} \label{directaicbc1} \\ 
(u^s+u^i)_+ = u_- + \text{i}\eta (\nu\cdot A\nabla u_-)  \quad \text{ and } \quad  \partial_\nu (u^s+u^i)_+ = \nu \cdot A\nabla u_-  \quad &\text{on $\partial D$}  \label{directaicbc2} 
\end{align}
where $u = u^s+u^i$ denotes the total field. Here, the subscripts $+$ and $-$ denote approaching the boundary from the outside and inside of $D$, respectively. Even though \eqref{directaicbc1}--\eqref{directaicbc2} is written in terms of $u^s$ in $\mathbb{R}^d \setminus \overline{D}$ and $u$ in $D$, since $u = u^s+u^i$ we consider the boundary values problem as a problem for just $u^s$. We also assume that the scattered field satisfies the radiation condition 
\begin{align}
    \partial_r u^s -\text{i} ku^s = \mathcal{O}\Big(\frac{1}{r^{(d+1)/2}}\Big)  \quad \text{ as } \quad r \to \infty \label{Sommerfield_aicbc}
\end{align}
uniformly with respect to $\hat{x}={x}/{r}$ where $r=|x|$. With the above assumptions on the coefficients, we have well--posedness of the direct problem \eqref{directaicbc1}--\eqref{directaicbc2} along with the radiation condition by appealing to similar as analysis in \cite[Section 2]{new-conductive-model} (see also \cite{cakonicoltonmonk05}). By appealing to \cite[Theorem 2.1]{new-conductive-model} along with the Trace Theorems, we have that 
$$\| u^s \|_{H^1(B_R \setminus \partial D)} \leq C \| u^i \|_{H^1(D)}.$$
Here, $B_R$ denotes the ball centered at the origin with radius $R>0$ such that $D \subset B_R$. 

We are interested in recovering the scatterer $D$ from measured far--field and Cauchy data. To this end, recall that the asymptotic expansion of the scattering field $u^s(\cdot \, , \hat{y})$ has the form
\begin{align*}
    u^s(x,\hat{y}) = \gamma_d\frac{\text{e}^{\text{i}k|x|}}{|x|^{\frac{d-1}{2}}} \Bigg\{  u^\infty(\hat{x}, \hat{y}) + \mathcal{O}\Bigg( \frac{1}{|x|} \Bigg)   \Bigg\}
\end{align*}
where $\gamma_d$ is given by 
$$\gamma_2 = \frac{\text{e}^{\text{i}\pi /4}}{\sqrt{8\pi k}}\quad \text{and} \quad \gamma_3=\frac{1}{4\pi}.$$ 
Here, $u^\infty(\hat{x},\hat{y})$ is the corresponding far--field pattern associated with \eqref{directaicbc1}--\eqref{directaicbc2}, which is dependent on the incident direction $\hat{y}$ and the observation direction $\hat{x}$. 
Given the far--field pattern, we define the far--field operator $F: L^2(\mathbb{S}^{d-1}) \longrightarrow L^2(\mathbb{S}^{d-1})$ as
\begin{align*}
    (Fg)(\hat{x}) := \int_{\mathbb{S}^{d-1}} u^\infty (\hat{x}, \hat{y})g(\hat{y}) \,  \text{d}s(\hat{y}) \quad \text{ where } \quad g \in L^2(\mathbb{S}^{d-1}).
\end{align*}
It is well known that the far--field operator $F$ is compact, see \cite{new-conductive-model}. For the Cauchy data, we will assume that 
$$\Big\{u^s(\cdot \, , \hat{y}) \, ,\, \partial_\nu u^s(\cdot \, , \hat{y})  \Big\}\Big|_{\partial \Omega} \quad \text{for all} \quad \hat{y} \in \mathbb{S}^{d-1} $$
where $\overline{D} \subset \Omega$. $\Omega \subset \mathbb{R}^d$ is a simply connected open set with a Lipschitz boundary $\partial\Omega$.

In order to do our analysis in the next section, we need to have a new expression for the far--field operator $F$. To this end, note that the fundamental solution to the Helmholtz Equation satisfies
$$\Delta \Phi(x,\cdot) +k^2\Phi(x,\cdot)=-\delta(x-\cdot) \quad \text{in } \mathbb{R}^d$$
along with the radiation condition and is given by 
\begin{align}
    \Phi(x,y) = \begin{cases}
        \frac{\text{i}}{4} H_0^{(1)}(k|x-y|) & \quad \text{in } \mathbb{R}^2, \\
        \\
        \frac{\text{e}^{\text{i}k|x-y|}}{4\pi |x-y|} & \quad \text{in }  \mathbb{R}^3.
    \end{cases} \label{aicbc-fundamentalsol-def}
\end{align}
Using Green's 2nd Identity in the interior of $D$ with the scattered field and fundamental solution gives us
\begin{align*}
    u^s(x)\chi_D = -\int_D &\Phi(x,z)[ \Delta u^s(z) +k^2u^s(z) ] \,\text{d}z + \int_{\partial D} \Phi(x,z)\partial_\nu u^s_-(z) - u^s_-(z)\partial_\nu \Phi(x,z) \,\text{d}s(z)  
\end{align*}
where $\chi_D = \chi_D(x)$ is the indicator function on the scatterer $D$ at $x\in \mathbb{R}^d$. Similarly, using Green's 2nd Identity in the exterior of $D$ and using that the scattered field solves Helmholtz equation on the exterior of $D$, we have that 
\begin{align*}
    u^s(x)(1-\chi_D) = \int_{\partial B_R} \Phi(x,z)\partial_\nu u^s(z) - u^s(z)\partial_\nu \Phi(x,z) \,\text{d}s(z) - \int_{\partial D} \Phi(x,z)\partial_\nu u^s_+(z) - u^s_+(z)\partial_\nu \Phi(x,z) \,\text{d}s(z)
\end{align*}
where again $B_R \subset \mathbb{R}^d$ is a ball of radius $R$ such that $D \subset B_R $. Therefore, if we add the above expressions, use Green's 1st Identity, and use the following equivalent expression of (\ref{directaicbc1})--(\ref{directaicbc2})
\begin{align*}
    \Delta u^s+k^2u^s=\nabla\cdot (I-A)\nabla(u^s+u^i) +k^2(1-n)(u^s+u^i) \quad &\text{in $\mathbb{R}^d \setminus \partial D$} \\
    u^s_+-u^s_- = \text{i} \eta (\nu\cdot A\nabla (u^s_-+u^i))  \quad \text{ and } \quad  \partial_\nu (u^s_+ +u^i) = \nu \cdot A\nabla (u^s_-+u^i)  \quad &\text{on $\partial D$} ,
\end{align*}
we will get a Lippmann--Schwinger representation of the scattered field
\begin{align}
    u^s(x) = \int_D &\nabla \Phi(x,z) \cdot (I-A)\nabla \big(u^s(z)+u^i(z)\big) + k^2(n-1)\big(u^s(z)+u^i(z)\big)\Phi(x,z) \,\text{d}z \nonumber \\
    &\quad \quad \quad + \int_{\partial D} \text{i}\eta \partial_\nu \big(u_+^s(z)+u^i(z)\big)\partial_\nu \Phi(x,z) \,\text{d}s(z).
\end{align}
Here, we used the fact that the fundamental solution and the scattered field satisfy the radiation condition (\ref{Sommerfield_aicbc}) which implies that the boundary integral on $\partial B_R$ tends to zero as $R\longrightarrow \infty$. This gives that the far--field pattern is given by the expression
\begin{align}
    u^\infty(\hat{x},\hat{y})  = \int_D &\nabla \text{e}^{-\text{i}k\hat{x}\cdot z} \cdot (I-A)\nabla \big(u^s(z)+u^i(z)\big) + k^2(n-1)\big(u^s(z)+u^i(z)\big)\text{e}^{-\text{i}k\hat{x}\cdot z} \,\text{d}z \nonumber \\
    &\quad + \int_{\partial D} \text{i}\eta \partial_\nu \big(u_+^s(z)+u^i(z)\big)\partial_\nu \text{e}^{-\text{i}k\hat{x}\cdot z} \,\text{d}s(z).
\end{align}
Using this formula for the far--field pattern, we obtain the following expression for the far--field operator by interchanging the order of integration 
\begin{align}
    Fg = \int_D &\nabla \text{e}^{-\text{i}k\hat{x}\cdot z} \cdot (I-A)\nabla (u^s_g(z)+v_g(z)) + k^2(n-1)(u^s_g(z)+v_g(z))\text{e}^{-\text{i}k\hat{x}\cdot z} \,\text{d}z \nonumber \\
    &\quad+ \int_{\partial D} \text{i}\eta \partial_\nu (u_{g,+}^s(z)+v_g(z))\partial_\nu \text{e}^{-\text{i}k\hat{x}\cdot z} \,\text{d}s(z). \label{aicbc-farfieldoperator-def}
\end{align}
Here, we let $v_g$ denote the Hergoltz wave function and $u_g^s$ solves the boundary value problem (\ref{directaicbc1})--(\ref{Sommerfield_aicbc}) with incident field $u^i=v_g$, defined as
\begin{align*}
    v_g(x) = \int_{\mathbb{S}^{d-1}} \text{e}^{\text{i}kx\cdot \hat{y}}g(\hat{y}) \,  \text{d}s(\hat{y})   \quad \text{ and } \quad u^s_g(x) = \int_{\mathbb{S}^{d-1}} u^s(x, \hat{y})g(\hat{y}) \,  \text{d}s(\hat{y}).
\end{align*}
We are interested in the inverse problem of recovering $D$ from the given the far--field and Cauchy data. The main goal is to study multiple direct sampling methods to perform these reconstructions of $D$, given we have the far--field and Cauchy data. In the next section, we will analyze the direct sampling method via the far--field data.

\section{Reconstruction via Far--Field Data} \label{aicbc-section-farfield}
In order to utilize the direct sampling method first considered in \cite{Liu} for reconstructing the region $D$ using the far--field data, we first need to obtain a symmetric factorization of the far--field operator. The factorization along with the Funk--Hecke integral identity will allow us to derive the resolution analysis of our imaging functional as in \cite{ayalaharriskleefeld2024,harris-dsm}. Just as in the previously cited works, we first consider the imaging functional 
$$I(z):= \big| \big(F\phi_z,\phi_z \big)_{L^2(\mathbb{S}^{d-1})}\big|^p$$ 
where $p \in \N$ and $\phi_z = \text{e}^{- \text{i}k\hat{x}\cdot z}$ i.e. the conjugate of the plane wave. Here, the parameter $p$ is chosen to sharpen the reconstruction of the scatterer. We will show that by plotting $I(z)$ we can recover the scatterer. This is due to the fact that $I(z)$ will decay sharply as the sampling point $z$ moves away from the boundary. Similar analysis was used for similar methods in \cite{Ito2012,Ito2013,liem,liem2}.

Now, given the expression for the far--field operator in (\ref{aicbc-farfieldoperator-def}), we derive a symmetric factorization for the far--field operator $F$. To this end, we first consider the following product space
\begin{align*}
   X:= L^2(D,\mathbb{C}^d)\times L^2(D)\times H^{1/2}(\partial D) 
\end{align*}
and its dual space, given by
\begin{align*}
   X^*:= L^2(D,\mathbb{C}^d)\times L^2(D)\times H^{-1/2}(\partial D)  .
\end{align*}
The sesquilinear dual product $\big\langle \cdot \, , \cdot \big\rangle_{X,X^*}$ is defined to be
\begin{align*}
    \big\langle (\phi_1,\phi_2,\phi_3), (\psi_1,\psi_2,\psi_3) \big\rangle_{X,X^*} = \int_D \phi_1\cdot\overline{\psi_1} + \phi_2 \overline{\psi_2} \,\text{d}x\ + \int_{\partial D} \phi_3 \overline{\psi_3} \,\text{d}s\ ,
\end{align*}
for $(\phi_1,\phi_2,\phi_3) \in X$ and $(\psi_1,\psi_2,\psi_3) \in X^*$. To get an initial factorization for $F$, we first need to define the source to far--field operator $G: X^* \longrightarrow L^2(\mathbb{S}^{d-1})$, given by
\begin{align}
    G(f,p,h) = \int_D &\nabla \text{e}^{-\text{i}k\hat{x}\cdot z} \cdot (I-A)(\nabla w+f) + k^2(n-1)(w+p)\text{e}^{-\text{i}k\hat{x}\cdot z} \,\text{d}z \nonumber \\
    &+ \int_{\partial D} \text{i}\eta (\partial_\nu w_{+} +h)\partial_\nu \text{e}^{-\text{i}k\hat{x}\cdot z} \,\text{d}s  \label{def-of-G}
\end{align}
where  $G(f,p,h) = w^\infty$ is the far--field pattern for $w \in H^1_{loc}(\mathbb{R}^d \setminus \partial D)$ that is the unique solution to the auxiliary problem
\begin{align}
    \nabla \cdot A\nabla w+k^2n w=\nabla\cdot (I-A)f +k^2(1-n)p \quad &\text{in $\mathbb{R}^d \setminus \partial D$} \label{directaicbc_aux1} \\ 
    w_+-w_- = \text{i}\eta (\partial_\nu w_+ + h)  \quad \text{ and } \quad  \partial_\nu w_+  = \nu \cdot A\nabla w_- +\nu\cdot(A-I)f  \quad &\text{on $\partial D$}  \label{directaicbc_aux2} 
\end{align}
along with the radiation condition for $w$, where $(f,p,h) \in X^*$. Note that the well--posedness of \eqref{directaicbc_aux1}--\eqref{directaicbc_aux2} with the radiation condition can be established just as in \cite{new-conductive-model}. Notice that the scattered field $u^s$ solves (\ref{directaicbc_aux1})--(\ref{directaicbc_aux2}) with 
$$(f,p,h) = \Big( \nabla u^i |_D, u^i |_D, \partial_\nu u^i |_{\partial D} \Big).$$ 
After some calculations, one can show that the solution $w$ to (\ref{directaicbc_aux1})--(\ref{directaicbc_aux2}) satisfies the variational identity 
\begin{align}
    -\int_{B_R} \nabla w\cdot\nabla\overline{\phi}-k^2w\overline{\phi} \,\text{d}x\ +\int_{\partial B_R}\overline{\phi}\Lambda_k w\,\text{d}s = \int_D &\nabla\overline{\phi}\cdot (A-I)(\nabla w+f) - k^2(n-1)(w+p)\overline{\phi} \,\text{d}x\ \nonumber \\
    &- \int_{\partial D} \frac{\text{i}}{\eta}\overline{\llbracket \phi \rrbracket} \llbracket w \rrbracket + \overline{\llbracket \phi \rrbracket} h \,\text{d}s\ \label{aicbc-aux-varform1}
\end{align}
for any $\phi\in H^1(B_R \setminus \partial D)$. This is obtained by multiplying the PDE by $\phi$ and using Green's Identities. In the above equation, we define $\llbracket \phi \rrbracket := (\phi_{+} - \phi_{-})$ i.e. the jump in the trace on $\partial D$. Here, we let  
$$\Lambda_k: H^{1/2}(B_R) \longrightarrow H^{-1/2}(B_R)$$ 
denote the Dirichlet-to-Neumann map on $\partial B_R$ for the radiating solution to the Helmholtz equation on the exterior of $B_R$, defined as 
\begin{equation*}
    \Lambda_k f = \partial_\nu \varphi \big|_{\partial B_R} \quad \text{ where }\quad  \Delta \varphi+k^2 \varphi=0 \text{ in $\mathbb{R}^d \setminus \overline{B_R}$} \quad \text{ and } \quad \varphi\big|_{\partial B_R} = f, 
\end{equation*}
along with the radiation condition (\ref{Sommerfield_aicbc}). From \cite[Theorem 5.22]{Cakoni-Colton-book} we know that $\Lambda_k$ is a bounded linear operator. This is a direct consequence of the well--posedness of the above Dirichlet problem along with the (Neumann) Trace Theorem. Next, we define the bounded linear operator 
\begin{align}
    H: L^2(\mathbb{S}^{d-1}) \longrightarrow X^* \quad \text{ given by }\quad (Hg)(x) = \Big( \nabla v_g |_D, v_g|_D, \partial_\nu v_g|_{\partial D} \Big). \label{aicbc_Hdef} 
\end{align}
By the superposition principle, we have that the far--field operator associated with (\ref{directaicbc1})--(\ref{Sommerfield_aicbc}) is given by $F=GH$. 

In order to get the symmetric factorization for $F$, we will now compute the adjoint of the operator $H$ defined in \eqref{aicbc_Hdef}. Notice that
\begin{align*}
    \big\langle  (\phi_1,\phi_2,\phi_3) , Hg \big\rangle_{X,X^*} &= \big( H^*(\phi_1,\phi_2,\phi_3) , g \big) _{L^2(\mathbb{S}^{d-1})} \\
    &= \int_D \nabla \overline{v_g} \cdot {\phi_1} + \overline{v_g} {\phi_2} \,\text{d}x + \int_{\partial D} {\phi_3}\partial_\nu \overline{v_g} \,\text{d}s \\
    &= \int_{\mathbb{S}^{d-1}} \overline{g(\hat{y})} {\left[  \int_D \nabla \text{e}^{-\text{i}kx\cdot\hat{y}} \cdot \phi_1 + \text{e}^{-\text{i}kx\cdot\hat{y}}\phi_2 \,\text{d}x + \int_{\partial D} \phi_3\partial_\nu \text{e}^{-\text{i}kx\cdot\hat{y}} \,\text{d}s(x)  \right]} \,\text{d}s(\hat{y}) .
\end{align*}
Therefore, we have that $H^*: X \longrightarrow L^2(\mathbb{S}^{d-1})$ is given by \\
$$H^*(\phi_1,\phi_2,\phi_3) = \int_D \nabla \text{e}^{-\text{i}kx\cdot\hat{y}} \cdot \phi_1 + \text{e}^{-\text{i}kx\cdot\hat{y}}\phi_2 \,\text{d}x + \int_{\partial D} \phi_3\partial_\nu \text{e}^{-\text{i}kx\cdot\hat{y}} \,\text{d}s(x) .$$
This leads us to define a middle operator $T:X^* \longrightarrow X$ as
\begin{align}
    T(f,p,h) = \Big( (I-A)(\nabla w+f)|_D ,\quad k^2(n-1)(w+p)|_D,\quad \text{i}\eta (\partial_\nu w_{+} +h)|_{\partial D} \Big) .\label{aicbc_Tdef}
\end{align}
Therefore, by appealing to the definition of $G$ in \eqref{def-of-G} we have that $G=H^*T$ and thus $F=H^*TH$, proving that we have a symmetric factorization for $F$. 
\begin{theorem} \label{aicbc-thm-symm_factorization}
    The far--field operator $F: L^2(\mathbb{S}^{d-1}) \longrightarrow L^2(\mathbb{S}^{d-1})$ has the symmetric factorization $F=H^*TH$, where $H$ and $T$ are defined in (\ref{aicbc_Hdef}) and (\ref{aicbc_Tdef}), respectively.
\end{theorem}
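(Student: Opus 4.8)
The plan is to assemble the statement from the three operators already introduced, so the proof amounts to verifying the two intermediate identities $F=GH$ and $G=H^{*}T$ and then composing them. First I would justify $F=GH$. Comparing the formula \eqref{aicbc-farfieldoperator-def} for $Fg$ with the definition \eqref{def-of-G} of $G(f,p,h)$, what must be checked is precisely that the triple $(f,p,h)=\big(\nabla v_g|_D,\,v_g|_D,\,\partial_\nu v_g|_{\partial D}\big)=Hg$ feeds into $G$ the scattered field $w=u^s_g$. This follows from the observation recorded just after \eqref{directaicbc_aux2}: the scattered field $u^s$ generated by an incident field $u^i$ solves the auxiliary problem \eqref{directaicbc_aux1}--\eqref{directaicbc_aux2} with data $\big(\nabla u^i|_D,\,u^i|_D,\,\partial_\nu u^i|_{\partial D}\big)$. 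Applying this with $u^i=v_g$ and invoking uniqueness of the auxiliary problem identifies the $w$ appearing in \eqref{def-of-G} with $u^s_g$, after which the two integral expressions agree term by term. Linearity of $g\mapsto v_g$ and of the solution map then gives $F=GH$ as bounded operators $L^2(\mathbb{S}^{d-1})\to L^2(\mathbb{S}^{d-1})$.

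Second, I would establish $G=H^{*}T$. The computation of $H^{*}$ displayed before \eqref{aicbc_Tdef} is a direct duality calculation: expand $\big\langle(\phi_1,\phi_2,\phi_3),Hg\big\rangle_{X,X^*}$ using the definition of the dual pairing and of $H$ in \eqref{aicbc_Hdef}, substitute $v_g(x)=\int_{\mathbb{S}^{d-1}}\text{e}^{\text{i}kx\cdot\hat y}g(\hat y)\,\text{d}s(\hat y)$ together with its gradient and normal derivative, and interchange the order of integration (Fubini, legitimate since every integrand is $L^2$ over the bounded sets $D$, $\partial D$, $\mathbb{S}^{d-1}$) to read off the conjugate-linear action on $g$; taking conjugates yields the stated formula for $H^{*}(\phi_1,\phi_2,\phi_3)$. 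Comparing this formula with \eqref{def-of-G}, the right-hand side of \eqref{def-of-G} is exactly $H^{*}$ evaluated at the triple $\big((I-A)(\nabla w+f)|_D,\;k^2(n-1)(w+p)|_D,\;\text{i}\eta(\partial_\nu w_{+}+h)|_{\partial D}\big)$, where $w=w(f,p,h)$ solves \eqref{directaicbc_aux1}--\eqref{directaicbc_aux2}; but that triple is by definition $T(f,p,h)$ as in \eqref{aicbc_Tdef}. Hence $G(f,p,h)=H^{*}T(f,p,h)$ for every $(f,p,h)\in X^{*}$, and chaining the two identities gives $F=GH=H^{*}TH$.

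The one point that warrants a line of care is that $T$ genuinely maps $X^{*}$ into $X$ (boundedly), since otherwise the composition would not be the claimed factorization through the stated spaces. For the first two slots this is immediate: $w\in H^1_{loc}(\mathbb{R}^d\setminus\partial D)$ gives $\nabla w\in L^2(D,\mathbb{C}^d)$ and $w\in L^2(D)$, while $A\in\mathscr{C}^1(\overline D)$ and $n\in L^\infty$ are bounded and $f\in L^2(D,\mathbb{C}^d)$, $p\in L^2(D)$, so the first two components land in $L^2(D,\mathbb{C}^d)\times L^2(D)$. The third slot is the only subtle one: a priori $\partial_\nu w_{+}$ and $h$ lie only in $H^{-1/2}(\partial D)$, but the transmission condition in \eqref{directaicbc_aux2} reads $\text{i}\eta(\partial_\nu w_{+}+h)=w_{+}-w_{-}=\llbracket w\rrbracket$, and each one-sided trace $w_{\pm}$ belongs to $H^{1/2}(\partial D)$, so the jump — hence the third component of $T$ — lies in $H^{1/2}(\partial D)$ as required. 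Boundedness of $T$ then follows from the well-posedness estimate for \eqref{directaicbc_aux1}--\eqref{directaicbc_aux2} (established as in \cite{new-conductive-model}) together with the trace theorem. I do not anticipate a genuine obstacle: the statement is a structural reorganization of identities derived in Section~\ref{acbc2-section-formulation}, and the variational identity \eqref{aicbc-aux-varform1}, though recorded for the later analysis of $T$, is not needed for the factorization itself.
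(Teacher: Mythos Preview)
Your proposal is correct and follows the same route as the paper: the argument there is exactly the verification of $F=GH$ via the superposition principle, the duality computation of $H^{*}$, and the identification $G=H^{*}T$ by matching the integrand in \eqref{def-of-G} with the triple \eqref{aicbc_Tdef}. Your added paragraph checking that the third component of $T$ lands in $H^{1/2}(\partial D)$ via the jump identity $\text{i}\eta(\partial_\nu w_{+}+h)=\llbracket w\rrbracket$ is a welcome clarification that the paper leaves implicit.
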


The symmetric factorization above is one of the main ingredients needed for studying the direct sampling method. In order to proceed, we need to study the middle operator $T$ in our factorization of $F$. Ultimately, we will show that $T$ is a coercive operator on $\overline{\mathrm{Range}(H)}$. We achieve this by proving that $T$ is the sum of a compact operator and a coercive operator. This approach is similar to the study of the so--called factorization method as seen in \cite[Lemma 1.17]{FM-Book}. Before we proceed, we need to define the real--part of an operator $S:X^* \longrightarrow X$. It is given by
$$\mathrm{Re}(S) = \frac{1}{2}\big(S+S^*\big)$$
where we assume that $X \subset X^*$. 

\begin{theorem} \label{aicbc-thm-T=S+K}
    Let the operator $T$ be defined as in (\ref{aicbc_Tdef}).
    \begin{enumerate}
        \item If $\mathrm{Re}(A)-I$ is positive definite uniformly in $D$, then $-T=S+K$ where $\mathrm{Re}(S)$ is a coercive operator and $K$ is a compact operator on $\overline{\mathrm{Range}(H)}$.
        \item If $I-\mathrm{Re}(A)-\alpha Im(A)$ and $\mathrm{Re}(A)-\frac{1}{\alpha}\mathrm{Im}(A)$ are positive definite uniformly in $D$, then $T=S+K$ where $\mathrm{Re}(S)$ is a coercive operator and $K$ is a compact operator on $\overline{\mathrm{Range}(H)}$.
    \end{enumerate}
\end{theorem}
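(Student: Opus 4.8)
The plan is to prove part (1) in full and obtain part (2) by the same scheme with $A-I$ replaced by $I-A$ and with one extra algebraic step to accommodate $\text{Im}(A)$. Fix $R>0$ with $\overline{D}\subset B_R$, write $\psi=(f,p,h)\in X^*$, and let $w=w(\psi)\in H^1_{loc}(\R^d\setminus\partial D)$ solve the auxiliary problem \eqref{directaicbc_aux1}--\eqref{directaicbc_aux2}; recall that $\psi\mapsto w(\psi)$ is bounded from $X^*$ into $H^1(B_R\setminus\partial D)$ by the well--posedness analysis of \cite{new-conductive-model}. The engine of the argument is to evaluate $\big\langle T\psi,\psi\big\rangle_{X,X^*}$ and then insert the variational identity \eqref{aicbc-aux-varform1} with the test function $\phi=w$; since \eqref{directaicbc_aux2} gives $\llbracket w\rrbracket=\text{i}\eta(\partial_\nu w_++h)$, the third component of $T\psi$ is exactly $\llbracket w\rrbracket$, so all dependence on $h$ can be rewritten through $w$. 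After collecting terms one arrives, schematically, at
\begin{align*}
    -\big\langle T\psi,\psi\big\rangle_{X,X^*}
    &= \int_D (A-I)(\nabla w+f)\cdot\overline{(\nabla w+f)}\,\text{d}x
      + \int_{B_R}\big(|\nabla w|^2-k^2|w|^2\big)\,\text{d}x
      - \int_{\partial B_R}\overline{w}\,\Lambda_k w\,\text{d}s \\
    &\quad - k^2\!\int_D (n-1)|w+p|^2\,\text{d}x + \mathcal{B}(\psi),
\end{align*}
where $\mathcal{B}(\psi)$ gathers the remaining $\partial D$ contributions (the $\tfrac{\text{i}}{\eta}|\llbracket w\rrbracket|^2$ term and the cross terms between $\llbracket w\rrbracket$, $\partial_\nu w_+$ and $h$). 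I then \emph{define} $S$ so that $\langle S\psi,\psi\rangle$ consists of the sign--definite pieces — the $(A-I)$--weighted square, $\int_{B_R}|\nabla w|^2$, and, after the splitting of $\Lambda_k$ below, the favourably signed part of the $\partial B_R$--term — and set $K:=-T-S$.

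\emph{Compactness of $K$.} Composing the solution map $\psi\mapsto w(\psi)$ with the compact embedding $H^1(B_R\setminus\partial D)\hookrightarrow L^2(B_R)$ and with the compact trace maps $H^1(B_R\setminus\partial D)\to H^{s}(\partial B_R)$, $\to H^{s}(\partial D)$ for $s<1/2$, every quadratic form that is $w$--only or that pairs a trace of $w$ against $f$, $p$ or $h$ is compact; this covers $\mathcal{B}(\psi)$, the $-k^2\int_{B_R}|w|^2$ term, and all cross terms arising when one expands the squares. For the $\partial B_R$--integral I would use the standard decomposition $\Lambda_k=\Lambda_{\text{i}k}+(\Lambda_k-\Lambda_{\text{i}k})$, in which $-\text{Re}\langle\Lambda_{\text{i}k}\varphi,\varphi\rangle\ge0$ (Green's identity in the exterior of $B_R$) and $\Lambda_k-\Lambda_{\text{i}k}$ is smoothing, hence compact as a map into $H^{-1/2}(\partial B_R)$. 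Thus $K$ is compact on $X^*$, in particular on $\overline{\text{Range}(H)}$.

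\emph{Coercivity of $\text{Re}(S)$.} Taking real parts and using $\text{Re}(A)-I\ge\gamma I$ for some $\gamma>0$, the $(A-I)$--square contributes at least $\gamma\|\nabla w+f\|^2_{L^2(D)}$ and $\int_{B_R}|\nabla w|^2\ge\|\nabla w\|^2_{L^2(D)}$; since $\|f\|_{L^2(D)}\le\|\nabla w+f\|_{L^2(D)}+\|\nabla w\|_{L^2(D)}$, these dominate a fixed multiple of $\|f\|^2_{L^2(D)}$. To reach $\|\psi\|^2_{X^*}$ I use that $\psi\in\overline{\text{Range}(H)}$: for a Herglotz wave $v$ one has $\|v\|_{L^2(D)}+\|\partial_\nu v\|_{H^{-1/2}(\partial D)}\le C\|\nabla v\|_{L^2(D)}$ — a sequence of Helmholtz solutions whose gradients tend to $0$ in $L^2(D)$ converges in $L^2(D)$ to a constant solving $\Delta v+k^2v=0$, hence to $0$ — so passing to the closure gives $\|p\|_{L^2(D)}+\|h\|_{H^{-1/2}(\partial D)}\le C\|f\|_{L^2(D)}$ on $\overline{\text{Range}(H)}$, whence $\|\psi\|^2_{X^*}\le C'\|f\|^2_{L^2(D)}$ there. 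Finally the real parts of the two remaining families of terms — the refractive--index term $-k^2\int_D\text{Re}(n-1)|w+p|^2$ (whose sign is not a priori controlled, as $n-1$ is not assumed sign--definite) and the $\partial D$ cross terms — are handled by pushing their $w$--only and cross pieces into $K$ and controlling what is left against $\|f\|^2_{L^2(D)}$ via the same $\overline{\text{Range}(H)}$ estimate, also using on that subspace that $w+p$ coincides with a total field for a Herglotz incident wave (together with $\text{Im}(n)\ge0$).

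For part (2) the computation is identical but with $+\big\langle T\psi,\psi\big\rangle_{X,X^*}$, so the leading square is $\int_D(I-A)(\nabla w+f)\cdot\overline{(\nabla w+f)}$; because $A$ is complex with $\overline{\xi}\cdot\text{Im}(A)\xi\le0$, its real part $I-\text{Re}(A)$ need not be positive, and the two hypotheses — $I-\text{Re}(A)-\alpha\,\text{Im}(A)$ and $\text{Re}(A)-\frac{1}{\alpha}\text{Im}(A)$ positive definite — are precisely the algebraic conditions that make the associated sesquilinear form coercive after completing the square (as in the standard analysis of anisotropic media, see \cite{Cakoni-Colton-book}), after which the rest of the argument is unchanged. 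The step I expect to be the real obstacle is the bookkeeping in the coercivity estimate: deciding, among the many lower--order and boundary terms produced by \eqref{aicbc-aux-varform1}, exactly which are genuinely compact and which must be retained in $S$, and then showing that the retained terms whose sign is a priori unfavourable (the refractive--index contribution and the $\partial D$ cross terms) are nevertheless absorbed once one restricts to $\overline{\text{Range}(H)}$; the factorization identity itself and the compactness claims are routine.
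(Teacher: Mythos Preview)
Your overall strategy for part~(1) --- insert the variational identity \eqref{aicbc-aux-varform1} with $\phi=w$ into $\langle T\psi,\psi\rangle$, then split off a coercive $S$ --- is the paper's strategy too, and your Poincar\'e--type estimate $\|p\|+\|h\|\le C\|f\|$ on $\overline{\mathrm{Range}(H)}$ is a legitimate alternative to the paper's device of simply adding $\int_D|w+p|^2$ to $S$ and running a contradiction argument. Two points, however, need correction.

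\textbf{The boundary term is not compact.} You place $\mathcal{B}(\psi)$ in $K$ and argue compactness via ``compact trace maps $H^1\to H^s(\partial D)$, $s<1/2$''. But $\llbracket w\rrbracket=\text{i}\eta(\partial_\nu w_++h)$, so the term $\tfrac{\text{i}}{\eta}|\llbracket w\rrbracket|^2$ defines the operator $\psi\mapsto(0,0,-\tfrac{\text{i}}{\eta}\llbracket w\rrbracket)$, which must land in $H^{1/2}(\partial D)$ to be in $X$; the map $\psi\mapsto\llbracket w\rrbracket$ is only \emph{bounded} $X^*\to H^{1/2}$, not compact, and dropping to $H^s$ with $s<1/2$ no longer gives an operator into $X$. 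The fix is the paper's: keep this term in $S$. Since $\eta$ is real, its real part vanishes and it contributes nothing to $\mathrm{Re}(S)$. Likewise the paper does not split $\Lambda_k$; it keeps $-\int_{\partial B_R}\overline w\,\Lambda_k w$ in $S$ and uses that $\mathrm{Re}(\Lambda_k)\le 0$.

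\textbf{Part (2) is not ``the same computation with the sign flipped''.} If you negate the identity for $-T$ you get $+\langle T\psi,\psi\rangle$ containing $\int_D(I-A)|\nabla w+f|^2-\int_{B_R}|\nabla w|^2+\cdots$, and the $-\int_{B_R}|\nabla w|^2$ destroys coercivity; moreover the real part of the ``leading square'' is $(I-\mathrm{Re}(A))|\nabla w+f|^2$, a single quadratic form in $\nabla w+f$, so there is nothing to ``complete'' and the hypotheses on $I-\mathrm{Re}(A)-\alpha\,\mathrm{Im}(A)$ and $\mathrm{Re}(A)-\tfrac1\alpha\mathrm{Im}(A)$ never enter. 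The paper instead inserts a \emph{different} variational identity --- the conjugate of \eqref{aicbc-aux-varform1} taken with respect to $(w_2,f_2,p_2,h_2)$ and test function $\phi=w_1$ --- which produces the separated terms $\int_D(I-A)f\cdot\overline f$, $\int_{B_R}\overline A\,\nabla w\cdot\overline{\nabla w}$, and the cross term $-2\text{i}\int_D\mathrm{Im}(A)\nabla w\cdot\overline f$. Young's inequality with parameter $\alpha$ then distributes the cross term between $f$ and $\nabla w$, and the two positive--definiteness assumptions are exactly what make the resulting $f$-- and $\nabla w$--coefficients positive. Without this second manipulation the argument for part~(2) does not close.
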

\begin{proof}
    (1) First, we analyze the operator $-T$. For us to be able to define our operators $S$ and $K$, we need to provide a new expression for our operator $-T$. First, by using the definition (\ref{aicbc_Tdef}) we have
    \begin{align}
        \big\langle T(f,p,h),(\psi_1,\psi_2,\psi_3)\big\rangle _{X,X^*} = \int_D &\overline{\psi_1}\cdot (I-A)(\nabla w +f)+k^2(n-1)(w+p)\overline{\psi_2} \,\text{d}x\ \nonumber \\
        &+ \int_{\partial D}\text{i}\eta(\partial_\nu w_{+} +h)\overline{\psi_3} \,\text{d}s \label{aicbc-Tdef-part1}
    \end{align}
    Let $(f_j,p_j,h_j)$ solve the auxiliary problem (\ref{directaicbc_aux1})--(\ref{directaicbc_aux2}) and the variational identity (\ref{aicbc-aux-varform1}) with the solution $w_j$, for $j=1,2$. Now, consider the variational form with respect to $(w_1,f_1,p_1,h_1)$ with $\phi=w_2$, given as
    \begin{align}
        -\int_{B_R} \nabla w_1\cdot\nabla\overline{w_2}-k^2w_1\overline{w_2} \,\text{d}x\ &+\int_{\partial B_R}\overline{w_2}\Lambda_k w_1\,\text{d}s \nonumber \\ 
        &\hspace{-1.25in}= \int_D \nabla\overline{w_2}\cdot (A-I)(\nabla w_1+f_1) - k^2(n-1)(w_1+p_1)\overline{w_2} \,\text{d}x- \int_{\partial D} \frac{\text{i}}{\eta}\overline{\llbracket w_2 \rrbracket} \llbracket w_1 \rrbracket + \overline{\llbracket w_2 \rrbracket} h_1 \,\text{d}s . \label{aicbc-varform-with-u2}
    \end{align}
    Now, notice that 
    \begin{align*}
    \big\langle T(f_1&,p_1,h_1),(f_2,p_2,h_2)\big\rangle _{X,X^*} \\
    &= \int_D \overline{f_2}\cdot (I-A)(\nabla w_1 +f_1)+k^2(n-1)(w_1+p_1)\overline{p_2} \,\text{d}x + \int_{\partial D}\text{i}\eta(\partial_\nu w_{1,+}+h_1)\overline{h_2} \,\text{d}s \\
        &= \int_D (\overline{\nabla w_2+f_2})\cdot (I-A)(\nabla w_1 +f_1)+k^2(n-1)(w_1+p_1)(\overline{w_2+p_2}) \,\text{d}x \\
        & \quad \quad \quad - \int_D \overline{\nabla w_2}\cdot (I-A)(\nabla w_{1} +f_1)+k^2(n-1)(w_1+p_1)\overline{w_2} \,\text{d}x\ .
    \end{align*}
    Using the variational form (\ref{aicbc-varform-with-u2}), we can now get
    \begin{align*}
        \big\langle T(f_1,p_1,h_1),(f_2,p_2,h_2)\big\rangle _{X,X^*} &= \int_D (\overline{\nabla w_2+f_2})\cdot (I-A)(\nabla w_1 +f_1)+k^2(n-1)(w_1+p_1)(\overline{w_2+p_2}) \,\text{d}x\  \\
        &\quad \quad \quad+ \int_{\partial D}\text{i}\eta(\partial_\nu w_{1,+}+h_1)\overline{h_2}+ \frac{\text{i}}{\eta}\overline{\llbracket w_2 \rrbracket} \llbracket w_1 \rrbracket + \overline{\llbracket w_2 \rrbracket} h_1 \,\text{d}s \\
        &\quad \quad \quad -\int_{B_R} \nabla w_1\cdot\nabla\overline{w_2}-k^2w_1\overline{w_2} \,\text{d}x\ +\int_{\partial B_R}\overline{w_2}\Lambda_k w_1\,\text{d}s .
    \end{align*}
    After using the boundary condition $\llbracket w_j \rrbracket = \text{i}\eta(\partial_\nu w_{j,+} +h_j)$, for $j=1,2$, and canceling terms, we finally get that
    \begin{align}
        \big\langle -T(f_1,p_1,h_1),(f_2,p_2,h_2)\big\rangle _{X,X^*} &= \int_D (\overline{\nabla w_2+f_2})\cdot (A-I)(\nabla w_1 +f_1)-k^2(n-1)(w_1+p_1)(\overline{w_2+p_2}) \,\text{d}x\  \nonumber\\
        & - \int_{\partial D} \text{i}\eta(\partial_\nu w_{1,+}+h_1)(\overline{\partial_\nu w_{2,+}+ h_2}) \,\text{d}s\nonumber \\
        &+\int_{B_R} \nabla w_1\cdot\nabla\overline{w_2}-k^2w_1\overline{u_2} \,\text{d}x\ -\int_{\partial B_R}\overline{w_2}\Lambda_k w_1\,\text{d}s . \label{aicbc-negTdef}
    \end{align}
    Now, we can define $-T=S+K$ where the operators $S$ and $K$ are given as
    \begin{align}
        \big\langle S(f_1,p_1,h_1),(f_2,p_2,h_2)\big\rangle _{X,X^*} &= \int_D (\overline{\nabla w_2+f_2})\cdot (A-I)(\nabla w_1 +f_1)+(w_1+p_1)(\overline{w_2+p_2}) \,\text{d}x\  \nonumber\\
        &- \int_{\partial D} \text{i}\eta(\partial_\nu w_{1,+}+h_1)(\overline{\partial_\nu w_{2,+}+ h_2}) \,\text{d}s \nonumber\\
        &+\int_{B_R} \nabla w_1\cdot\nabla\overline{w_2}+w_1\overline{w_2} \,\text{d}x\ -\int_{\partial B_R}\overline{w_2}\Lambda_k w_1\,\text{d}s \label{aicbc-negT-Sdef}
    \end{align}
    and
    \begin{align}
        \big\langle K(f_1,p_1,h_1),(f_2,p_2,h_2)\big\rangle _{X,X^*} &= \int_D -k^2(n-1)(w_1+p_1)(\overline{w_2+p_2}) - (w_1+p_1)(\overline{w_2+p_2}) \,\text{d}x \nonumber \\
        &- \int_{B_R} (k^2+1)w_1\overline{w_2}\,\text{d}x . \label{aicbc-negT-Kdef}
    \end{align}
    Notice, by virtue of the compact embeddings $H^1(D)$ into $L^2(D)$ and $H^1(B_R)$ into $L^2(B_R)$ we have that $K$ is a compact operator on $\overline{\text{Range}(H)}$.
    To complete the proof, we will show that $\mathrm{Re}(S)$ is a coercive operator provided $\text{Re}(A)-I$ is positive definite uniformly in $D$. To this end, notice that
    \begin{align}
        \big\langle \mathrm{Re}(S)(f_1,p_1,h_1),(f_1,p_1,h_1)\big\rangle _{X,X^*} &= \int_D (\overline{\nabla w_1+f_1})\cdot (\mathrm{Re}(A)-I)(\nabla w_1 +f_1)+|w_1+p_1|^2 \,\text{d}x\  \nonumber\\
        &+\int_{B_R} |\nabla w_1|^2+|w_1|^2 \,\text{d}x\ -\int_{\partial B_R}\overline{w_1}\mathrm{Re}(\Lambda_k) w_1\,\text{d}s\ \label{aicbc-negT-Re(S)def}
    \end{align}
    where $\mathrm{Re}(\Lambda_k)$ is a non-positive operator (see for e.g. \cite{CCH-book}). We want to show that $\mathrm{Re}(S)$ is positive coercive, i.e. there exists a $\beta >0$ independent of $(f_1,p_1,h_1)\in X^*$ such that
    $$\big\langle \mathrm{Re}(S)(f_1,p_1,h_1),(f_1,p_1,h_1)\big\rangle _{X,X^*} \geq \beta\|(f_1,p_1,h_1)\|^2_{X^*} .$$
    To this end, we proceed with a contradiction argument. We assume $\text{Re}(S)$ has no such constant, meaning there exists a sequence $(f^n,p^n,h^n)\in X^*$ with corresponding $w^n\in H^1_{loc}(\mathbb{R}^d\setminus \partial D)$ satisfying (\ref{directaicbc_aux1})--(\ref{directaicbc_aux2}) such that
    $$\|(f^n,p^n,h^n)\|^2_{X^*}=1 \quad\text{ and }\quad \big\langle \mathrm{Re}(S)(f^n,p^n,h^n),(f^n,p^n,h^n)\big\rangle _{X,X^*}\leq \frac{1}{n}.$$
    Since $\mathrm{Re}(A)-I$ is positive definite and $\mathrm{Re}(\Lambda_k)$ is non-positive, then as $n\longrightarrow \infty$, we have
    $$\big\langle \mathrm{Re}(S)(f^n,p^n,h^n),(f^n,p^n,h^n)\big\rangle _{X,X^*} \longrightarrow 0 \quad \text{implying}\quad w^n\longrightarrow 0 \quad \text{in }H^1(B_R \setminus \partial D) \quad \text{implying}$$
    $$f^n \longrightarrow 0 \quad\text{in $[L^2(D)]^d$ and}\quad p^n \longrightarrow 0 \quad \text{in $L^2(D)$}$$
    therefore, via the auxillary boundary value problem variational form, we also have that $h^n \longrightarrow 0$ in $H^{-1/2}(\partial D)$. This contradicts the normalization of $(f^n,p^n,h^n)$ in $X^*$. Therefore, $\mathrm{Re}(S)$ is a positive coercive operator provided $\mathrm{Re}(A)-I$ is a positive definite matrix. \\

    (2) We will now have new assumptions on the coefficients, so we need to derive a new variational expression for the operator $T$. To this end, consider again $(f_j,p_j,h_j)$ solving the auxiliary problem (\ref{directaicbc_aux1})--(\ref{directaicbc_aux2}) and the variational form (\ref{aicbc-aux-varform1}) with the solution $w_j$, for $j=1,2$. Now, from (\ref{aicbc-Tdef-part1}), notice that
    \begin{align}
        \big\langle T(f_1,p_1,h_1)&,(f_2,p_2,h_2)\big\rangle _{X,X^*} \nonumber \\
         &= \int_D \overline{f_2}\cdot (I-A)(\nabla w_1 +f_1)+k^2(n-1)(w_1+p_1)\overline{p_2} \,\text{d}x + \int_{\partial D} \text{i}\eta(\partial_\nu w_{1,+}+h_1)\overline{h_2} \,\text{d}s \nonumber \\ 
        &= \int_D \overline{f_2}\cdot (I-A)f_1+k^2(n-1)p_1\overline{p_2} \,\text{d}x\  + \int_{\partial D} \text{i}\eta(\partial_\nu w_{1,+}+h_1)\overline{h_2} \,\text{d}s \nonumber \\ 
        &\quad \quad \quad + \int_D \overline{f_2}\cdot (I-A)\nabla w_1+k^2(n-1)w_1\overline{p_2} \, \text{d}x. \nonumber
    \end{align}
    Now, consider the following variational form with respect to $(w_2,f_2,p_2,h_2)$ with $\phi=w_1$ and conjugate both sides, we obtain
    \begin{align}
        \int_{B_R} \overline{A}\nabla w_1\cdot\nabla\overline{w_2}&-k^2\overline{n}w_1\overline{w_2} \,\text{d}x\ -\int_{\partial B_R}w_1\overline{\Lambda_k w_2}\,\text{d}s \nonumber \\ 
        &= \int_D \overline{f_2}\cdot (I-\overline{A})\nabla w_1 + k^2(\overline{n}-1)w_1\overline{p_2} \,\text{d}x+ \int_{\partial D} -\frac{ \text{i}}{\eta}\overline{\llbracket w_2 \rrbracket} \llbracket w_1 \rrbracket + \llbracket w_1 \rrbracket \overline{h_2} \,\text{d}s . \label{aicbc-varform-with-u1}
    \end{align}
    Substituting this variational form into the definition of $T$, we can rewrite the definition of $T$ to obtain
    \begin{align*}
        \big\langle T(f_1,p_1,h_1),(f_2,p_2,h_2)\big\rangle _{X,X^*} &= \int_D \overline{f_2}\cdot (I-A)f_1+k^2(n-1)p_1\overline{p_2} \,\text{d}x\ + \int_{\partial D} \text{i}\eta(\partial_\nu w_{1,+}+h_1)\overline{h_2} \,\text{d}s \\
        &\hspace{-0.5in}+ \int_{\partial D} \frac{ \text{i}}{\eta}\overline{\llbracket w_2 \rrbracket} \llbracket w_1 \rrbracket - \llbracket w_1 \rrbracket \overline{h_2} \,\text{d}s + \int_D \overline{f_2}\cdot (\overline{A}-A)\nabla w_1+k^2(n-\overline{n})w_1\overline{p_2} \,\text{d}x\ \nonumber \\
        &\hspace{-0.5in}+\int_{B_R} \overline{A}\nabla w_1\cdot\nabla\overline{u_2}-k^2\overline{n}w_1\overline{u_2} \,\text{d}x\ -\int_{\partial B_R}w_1\overline{\Lambda_k w_2}\,\text{d}s .
    \end{align*}
    Using the boundary condition $\llbracket w_j \rrbracket = \text{i}\eta(\partial_\nu w_{j,+} +h_j)$, for $j=1,2$, and canceling terms, we also have
    \begin{align*}
        \big\langle T(f_1,p_1,h_1),(f_2,p_2,h_2)\big\rangle _{X,X^*} &= \int_D \overline{f_2}\cdot (I-A)f_1+k^2(n-1)p_1\overline{p_2} \,\text{d}x\ + \int_{\partial D} \frac{ \text{i}}{\eta}\llbracket w_1 \rrbracket\overline{\llbracket w_2 \rrbracket}  \,\text{d}s\ \nonumber \\ 
        &- 2 \text{i}\int_D \overline{f_2}\cdot \mathrm{Im}(A)\nabla w_1-k^2 \mathrm{Im}(n)w_1\overline{p_2} \,\text{d}x\ \nonumber \\
        &+\int_{B_R} \overline{A}\nabla w_1\cdot\nabla\overline{u_2}-k^2\overline{n}w_1\overline{w_2} \,\text{d}x\ -\int_{\partial B_R}w_1\overline{\Lambda_k w_2}\,\text{d}s.
    \end{align*}
    We can now define $T=S+K$ where the operators $S$ and $K$ are given by
    \begin{align}
        \big\langle S(f_1,p_1,h_1)&,(f_2,p_2,h_2)\big\rangle _{X,X^*} \nonumber\\
        &= \int_D \overline{f_2}\cdot (I-A)f_1+p_1\overline{p_2} \,\text{d}x\ + \int_{\partial D} \frac{ \text{i}}{\eta}\llbracket w_1 \rrbracket\overline{\llbracket w_2 \rrbracket}  \,\text{d}s - 2 \text{i}\int_D \overline{f_2}\cdot \mathrm{Im}(A)\nabla w_1 \,\text{d}x\ \nonumber \\
        &+\int_{B_R} \overline{A}\nabla w_1\cdot\nabla\overline{w_2} + w_1\overline{w_2}  \,\text{d}x\ -\int_{\partial B_R}w_1\overline{\Lambda_k w_2}\,\text{d}s\ \label{aicbc-posT-Sdef}
    \end{align}
    and
    \begin{align}
        \big\langle K(f_1,p_1,h_1),(f_2,p_2,h_2)\big\rangle _{X,X^*} &=  \int_D k^2(n-1)p_1\overline{p_2}-p_1\overline{p_2} \,\text{d}x\ - \int_{B_R} (k^2+1)\overline{n}w_1\overline{w_2}\,\text{d}x\ \nonumber \\
        &+ 2 \text{i}\int_D k^2 \mathrm{Im}(n)w_1\overline{p_2} \,\text{d}x . \label{aicbc-posT-Kdef}
    \end{align}
    Notice that (\ref{aicbc-posT-Kdef}) contains only $L^2$ terms in $D$ and $B_R$, therefore the compactness of $K$ follows similarly to the first case. 

Now, we wish to show that $\mathrm{Re}(S)$ is a positive coercive operator. To this end, notice that after some calculations we obtain that 
    \begin{align}
        \big\langle \mathrm{Re}(S)(f_1,p_1,h_1)&,(f_2,p_2,h_2)\big\rangle _{X,X^*}  \nonumber  \\
        &= \int_D \overline{f_2}\cdot (I-\mathrm{Re}(A))f_1+p_1\overline{p_2} \,\text{d}x -  \text{i}\int_D \overline{f_2}\cdot \mathrm{Im}(A)\nabla w_1 \,\text{d}x\ + \text{i}\int_D \nabla\overline{w_2}\cdot \mathrm{Im}(A)f_1 \,\text{d}x\ \nonumber \\
        &\quad \quad \quad +\int_{B_R} \mathrm{Re}(A)\nabla w_1\cdot\nabla\overline{w_2}+ w_1\overline{w_2} \,\text{d}x\ -\frac{1}{2}\int_{\partial B_R}w_1\overline{\Lambda_k w_2} + \overline{w_2}\Lambda_k w_1\,\text{d}s\ \label{aicbc-posT-Re(S)def}
    \end{align}
    Notice, by Young's Inequality that
    $$\Big| \int_D \overline{f} \cdot \text{Im}(A)\nabla w \,\text{d}x \Big| \leq \frac{\alpha}{2}(\text{Im}(A)f, f)_{L^2(D)} + \frac{1}{2\alpha}(\text{Im}(A)\nabla w, \nabla w)_{L^2(D)}$$
    for any $\alpha >0$. Therefore, we have that 
    \begin{align*}
        \big\langle \mathrm{Re}(S)(f_1,p_1,h_1),(f_1,p_1,h_1)\big\rangle _{X,X^*}&\geq \Big( \big[ I-\text{Re}(A)-\alpha \text{Im}(A) \big]f_1, f_1 \Big)_{L^2(D)} + (p_1,p_1)_{L^2(D)} \\
        &+ \Big( \big[ \text{Re}(A)-\frac{1}{\alpha} \text{Im}(A) \big]\nabla w_1, \nabla w_1 \Big)_{L^2(D)} + (\nabla w_1,\nabla w_1)_{L^2(B_R\setminus D)} \\
        &+ (w_1,w_1)_{L^2(B_R)} - \int_{\partial B_R} \overline{w_1}\text{Re}(\Lambda_k)w_1 \, \text{d}s .
    \end{align*}
    Provided that $\alpha >0$ is a constant such that $I-\mathrm{Re}(A)-\alpha \mathrm{Im}(A)$ and $\mathrm{Re}(A)-\frac{1}{\alpha}\mathrm{Im}(A)$ are positive definite uniformly in $D$, we have that $\mathrm{Re}(S)$ is a positive coercive operator using the same contradiction argument as in the first part of the proof. 
\end{proof}
Next, we will analyze the behavior of the imaginary part of the operator $T$. Studying the imaginary part of $T$ is necessary for our coercivity result for the operator. We now prove that the imaginary part of the operator $T$ is positive on $\overline{\text{Range}(H)}$ provided that the wave number $k$ is not a transmission eigenvalue. 

Consider the following boundary value problem,
\begin{align}
    \nabla \cdot A\nabla u+k^2n u=0  \quad \text{ and } \quad \Delta v+k^2v=0 \quad&\text{in $D$} \label{aicbc-TEVprob1} \\ 
        u + \text{i} \eta \partial_{\nu_A}u = v  \quad \text{ and } \quad  \partial_\nu v=\partial_{\nu_A}u  \quad &\text{on $\partial D$} \label{aicbc-TEVprob2} 
\end{align}
Here, we let $\partial_{\nu_A} = \nu \cdot A\grad$ on the boundary of the scatterer $D$. The values $k\in\mathbb{C}$ such that there exists a non-trivial solution $(u,v)\in H^1(D)\times H^1(D)$ to (\ref{aicbc-TEVprob1})--(\ref{aicbc-TEVprob2}) are called transmission eigenvalues. This boundary value problem is obtained by assuming that $F$ is not injective with a dense range, meaning the scattered field is equal to zero outside the scatterer $D$. Similar transmission eigenvalue problems have been recently studied in \cite{Victor-TEV,kleefeldcolton17,aniso-cbc-te-xiang, kleefeldpieronek}.

\begin{theorem} \label{aicbc-thm-Im(T)}
    Let the operator $T$ be defined as in (\ref{aicbc_Tdef}). Then, we have that
    $$\mathrm{Im}\big\langle T(f,p,h),(f,p,h)\big\rangle _{X,X^*}>0$$
    for all $(0,0,0)\neq (f,p,h)\in \overline{\mathrm{Range}(H)}$, provided that $k$ is not a transmission eigenvalue. 
\end{theorem}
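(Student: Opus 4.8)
The plan is to compute $\mathrm{Im}\big\langle T(f,p,h),(f,p,h)\big\rangle_{X,X^*}$ explicitly, show it is a sum of nonnegative terms for \emph{any} $(f,p,h)\in X^*$, and then use the transmission eigenvalue hypothesis together with a description of $\overline{\mathrm{Range}(H)}$ to exclude equality to zero. First I would start from \eqref{aicbc-Tdef-part1} with $(\psi_1,\psi_2,\psi_3)=(f,p,h)$ and insert the variational identity \eqref{aicbc-aux-varform1} tested against $\phi=w$, where $w$ is the solution of \eqref{directaicbc_aux1}--\eqref{directaicbc_aux2} associated to $(f,p,h)$. Writing $\overline f=\overline{\nabla w+f}-\overline{\nabla w}$ and $\overline p=\overline{w+p}-\overline w$ turns the ``cross'' contributions in $\langle T(f,p,h),(f,p,h)\rangle$ into exactly the terms supplied by \eqref{aicbc-aux-varform1}; eliminating $h$ on $\partial D$ through the conductive condition $\llbracket w\rrbracket=\mathrm{i}\eta(\partial_\nu w_++h)$ then makes all boundary contributions on $\partial D$ (other than the square term $\mathrm i\eta\,|\partial_\nu w_++h|^2$) collapse into the real quantity $-2\,\mathrm{Re}\!\int_{\partial D}\llbracket w\rrbracket\,\overline{\partial_\nu w_+}\,\text{d}s$, while the volume terms on $B_R$ are real. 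Taking imaginary parts and letting $R\to\infty$ — using that $\mathrm{Re}(\Lambda_k)$ is nonpositive and that $\mathrm{Im}\!\int_{\partial B_R}\overline w\,\Lambda_k w\,\text{d}s$ is independent of $R$ and equals $c_d\,\|w^\infty\|_{L^2(\mathbb{S}^{d-1})}^2$ for the radiating field $w$, with $c_d>0$ — I expect to obtain
\begin{align*}
\mathrm{Im}\big\langle T(f,p,h),(f,p,h)\big\rangle_{X,X^*}
&= -\int_D \overline{(\nabla w+f)}\cdot\mathrm{Im}(A)(\nabla w+f)\,\text{d}x + k^2\int_D \mathrm{Im}(n)\,|w+p|^2\,\text{d}x \\
&\qquad + \int_{\partial D}\eta\,|\partial_\nu w_++h|^2\,\text{d}s + c_d\,\|w^\infty\|^2_{L^2(\mathbb{S}^{d-1})}.
\end{align*}

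Each of the four terms on the right is nonnegative by the standing assumptions: $\overline\xi\cdot\mathrm{Im}(A)\xi\le 0$, $\mathrm{Im}(n)\ge 0$, $\eta\ge\eta_{\min}>0$, and $c_d>0$; hence $\mathrm{Im}\big\langle T(f,p,h),(f,p,h)\big\rangle_{X,X^*}\ge 0$ for all $(f,p,h)\in X^*$. Since the left side is independent of $R$ and the first three terms are supported near $D$, the $B_R$ terms produced along the way must cancel, which legitimizes the passage $R\to\infty$. It remains to upgrade this to a strict inequality on $\overline{\mathrm{Range}(H)}$.

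Suppose then that $(f,p,h)\in\overline{\mathrm{Range}(H)}$ satisfies $\mathrm{Im}\big\langle T(f,p,h),(f,p,h)\big\rangle_{X,X^*}=0$; the goal is to show $(f,p,h)=0$. All four terms vanish. From $\|w^\infty\|_{L^2(\mathbb{S}^{d-1})}=0$, Rellich's lemma and unique continuation in the connected exterior $\mathbb{R}^d\setminus\overline D$ give $w\equiv 0$ there, hence $w_+=0$ and $\partial_\nu w_+=0$ on $\partial D$; the vanishing of the third term (together with $\eta\ge\eta_{\min}>0$ and $\llbracket w\rrbracket=\mathrm i\eta(\partial_\nu w_++h)\in L^2(\partial D)$) then gives $\partial_\nu w_++h=0$, so $h=0$ and $\llbracket w\rrbracket=0$, i.e. $w_-=0$ on $\partial D$, while the second condition in \eqref{directaicbc_aux2} yields $\nu\cdot A\nabla w_-=\nu\cdot(I-A)f$ on $\partial D$. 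Next I would use that membership $(f,p,h)\in\overline{\mathrm{Range}(H)}$ forces, after passing to the limit in $\big(\nabla v_{g_n}|_D,\,v_{g_n}|_D,\,\partial_\nu v_{g_n}|_{\partial D}\big)\to(f,p,h)$ in $X^*$, that $v:=p\in H^1(D)$ solves $\Delta v+k^2 v=0$ in $D$ with $f=\nabla v|_D$ and $h=\partial_\nu v|_{\partial D}$. Putting $u:=w+v$ in $D$, a direct computation from \eqref{directaicbc_aux1} gives $\nabla\cdot A\nabla u+k^2 n u=0$ in $D$, and the boundary relations derived above yield $u_-=v$ on $\partial D$ and $\partial_{\nu_A}u=\nu\cdot A\nabla u_-=\partial_\nu v=h=0$ on $\partial D$; hence $u+\mathrm i\eta\,\partial_{\nu_A}u=v$ and $\partial_\nu v=\partial_{\nu_A}u$ on $\partial D$, so $(u,v)$ solves the transmission eigenvalue problem \eqref{aicbc-TEVprob1}--\eqref{aicbc-TEVprob2}. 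Since $k$ is not a transmission eigenvalue, $(u,v)=(0,0)$, so $v\equiv 0$ in $D$ and therefore $(f,p,h)=\big(\nabla v|_D,v|_D,\partial_\nu v|_{\partial D}\big)=0$, a contradiction; this proves the strict positivity.

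I expect the main obstacle to be the first step: carefully tracking all signs and complex conjugates while combining the definition \eqref{aicbc_Tdef} of $T$, the identity \eqref{aicbc-aux-varform1} tested against $w$ itself, and the conductive jump conditions, so that everything that is not manifestly nonnegative cancels into real quantities and the $B_R$-radius dependence disappears. A secondary delicate point is pinning down the precise characterization of $\overline{\mathrm{Range}(H)}$ and verifying that the auxiliary boundary conditions translate exactly into the conductive transmission-eigenvalue conditions \eqref{aicbc-TEVprob1}--\eqref{aicbc-TEVprob2}.
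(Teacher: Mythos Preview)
Your proposal is correct and follows essentially the same approach as the paper: derive the identity
\[
\mathrm{Im}\big\langle T(f,p,h),(f,p,h)\big\rangle_{X,X^*}
= -\int_D \overline{(\nabla w+f)}\cdot\mathrm{Im}(A)(\nabla w+f)\,\text{d}x + k^2\!\int_D \mathrm{Im}(n)\,|w+p|^2\,\text{d}x
+ \int_{\partial D}\eta\,|\partial_\nu w_++h|^2\,\text{d}s + c_d\|w^\infty\|^2
\]
by combining the definition of $T$ with the variational identity \eqref{aicbc-aux-varform1} tested against $w$, observe nonnegativity, and then use Rellich's lemma plus the characterization of $\overline{\mathrm{Range}(H)}$ to reduce the equality case to the transmission eigenvalue problem \eqref{aicbc-TEVprob1}--\eqref{aicbc-TEVprob2}. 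The only minor deviation is that you additionally invoke the vanishing of the boundary term $\int_{\partial D}\eta|\partial_\nu w_++h|^2\,\text{d}s$ to conclude $h=0$ and $w_-=0$ before setting up the eigenvalue system; the paper instead works directly from $w_+=\partial_\nu w_+=0$ and the jump conditions \eqref{directaicbc_aux2} to obtain the transmission boundary relations for $(w+v,v)$ without first forcing $h=0$, which is slightly more economical but leads to the same conclusion.

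Two small remarks on your write-up: (i) your description of the $\partial D$ boundary terms ``collapsing into the real quantity $-2\,\mathrm{Re}\!\int_{\partial D}\llbracket w\rrbracket\,\overline{\partial_\nu w_+}\,\text{d}s$'' is not quite what happens---in the clean derivation (already carried out in the proof of Theorem~\ref{aicbc-thm-T=S+K}) the $\partial D$ contribution reduces exactly to $\int_{\partial D}\mathrm{i}\eta|\partial_\nu w_++h|^2\,\text{d}s$ with no residual real cross term; (ii) the $B_R$ volume integral $-\int_{B_R}|\nabla w|^2-k^2|w|^2\,\text{d}x$ is real, so nothing needs to ``cancel'' when taking imaginary parts---only $\mathrm{Im}\!\int_{\partial B_R}\overline w\,\Lambda_k w\,\text{d}s$ survives and is handled by letting $R\to\infty$.
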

\begin{proof}
    Recall the following definition of the operator $T$ from the proof of Theorem \ref{aicbc-thm-T=S+K}
    \begin{align*}
        \big\langle T(f,p,h),(f,p,h)\big\rangle _{X,X^*} &= \int_D (\overline{\nabla w+f})\cdot (I-A)(\nabla w +f)+k^2(n-1)|w+p|^2\,\text{d}x\  \nonumber\\
        &+ \int_{\partial D} \text{i} \eta|\partial_\nu w_{+}+h|^2 \,\text{d}s \nonumber\\
        &-\int_{B_R} |\nabla w|^2-k^2|w|^2 \,\text{d}x\ +\int_{\partial B_R}\overline{w}\Lambda_k w\,\text{d}s .
    \end{align*}
    Since $w$ is a radiating solution to the Helmholtz equation outside of $D$ we have that
    \begin{align}
        \int_{\partial B_R}\overline{w}\Lambda_k w\,\text{d}s\ = \int_{\partial B_R}\overline{w}\partial_\nu w\,\text{d}s\ \longrightarrow \text{i}|\gamma|^2k\int_{\mathbb{S}^{d-1}}|w^\infty|^2\,\text{d}s\ \quad\text{as}\quad R\longrightarrow \infty .
    \end{align}
    Therefore, as $R\longrightarrow\infty$, we see that
    \begin{align}
        \text{Im}\big\langle T(f,p,h),(f,p,h)\big\rangle _{X,X^*} &= \int_D (\overline{\nabla w+f})\cdot (-\text{Im}(A))(\nabla w +f)+k^2 \text{Im}(n)|w+p|^2\,\text{d}x\  \nonumber\\
        &+ \int_{\partial D}\eta|\partial_\nu w_{+}+h|^2 \,\text{d}s\ + |\gamma|^2k \int_{\mathbb{S}^{d-1}}|w^\infty|^2\,\text{d}s .  \label{aicbc-Im(T)def}
    \end{align}
    By our assumptions on the imaginary parts of the coefficients, we conclude that the imaginary part of $T$ is non-negative in $X^*$. 

    Next, we prove that the imaginary part of $T$ is positive on $\overline{\text{Range}(H)}$. To this end, assume that there exists $(f,p,h)\in \overline{\text{Range}(H)}$ such that 
    $$\text{Im}\big\langle T(f,p,h),(f,p,h)\big\rangle _{X,X^*}=0,$$
    we will prove that $(f,p,h)=(0,0,0)$. From the definition of $H$, we have that 
    $$(f,p,h)=\Big(\nabla v|_D, v|_D, \partial_\nu v|_{\partial D} \Big)$$
     where $v$ is a solution to the Helmholtz equation in $D$. Notice, from (\ref{aicbc-Im(T)def}), we have that $w^\infty=0$ and by Rellich's Lemma we have that $w=0$ in $\mathbb{R}^d\setminus D$. By the boundary conditions (\ref{directaicbc_aux2}), we obtain that 
    $$w_-+  \text{i}\eta \partial_{\nu_A}(w_-+v)=0 \quad \text{and}\quad \partial_\nu v=\partial_{\nu_A}(w_-+v) \quad \text{ on }\partial D $$
since we have that $\partial_\nu w_+=w_+=0$ on $\partial D$. 
    Now, we get that $(w+v,v)$ satisfies the boundary value problem
    \begin{align*}
        \nabla \cdot A\nabla (w+v)+k^2n(w+v)=0  \quad \text{ and } \quad \Delta v+k^2v=0 \quad&\text{in $D$} \\ 
        (w +v) +  \text{i}\eta \partial_{\nu_A}(w+v)=v  \quad \text{ and } \quad  \partial_\nu v=\partial_{\nu_A}(w+v)  \quad &\text{on $\partial D$} .
    \end{align*}
    By our assumption that $k$ is not a transmission eigenvalue, we have that the above boundary value problem only admits the trivial solution $(w+v,v)=(0,0)$, meaning $(f,p,h)=(0,0,0)$, proving the claim.
\end{proof}
Now that we have shown that our operator $T$ has these properties, the next part of our analysis is the Funk--Hecke integral identity. This identity will allow us to evaluate the Herglotz wave function for the conjugate plane wave $\phi_z=\text{e}^{- \text{i}kz\cdot\hat{y}}$, given by
\begin{align}
    v_{\phi_z}(x)=\int_{\mathbb{S}^{d-1}}\text{e}^{-\text{i}k(z-x)\cdot\hat{y}}\,\text{d}s(\hat{y}) = 
    \begin{cases}
        2\pi J_0(k|x-z|) & \quad \text{in } \mathbb{R}^2, \\
        \\
        4\pi j_0(k|x-z|) & \quad \text{in }  \mathbb{R}^3,
    \end{cases} \label{aicbc-Funk-Hecke}
\end{align}
where $J_0$ is the Bessel function of the first kind of order $0$ and $j_0$ is the spherical Bessel function of the first kind of order $0$. These Bessel functions and their derivatives satisfy
$$|J_0(t)|  \leq \frac{C}{\sqrt{t}} \quad \text{ and }\quad |j_0(t)| \leq \frac{C}{t} \quad \text{for all $t>0$}$$
for some constant $C$. Therefore, we have a bound for the norm of the Funk-Hecke integral identity and for the norm of $H\phi_z$, namely

$$\|v_{\phi_z}\|_{H^1(D)} \leq C\big[ \text{dist}(z,D) \big]^\frac{1-d}{2} $$
which implies that 
$$ \|H\phi_z\|^2_{X^*} = \Big(\|\nabla v_{\phi_z}\|^2_{L^2(D)}+ \|v_{\phi_z}\|^2_{L^2(D)}+\|\partial_\nu v_{\phi_z}\|^2_{H^{-1/2}(\partial D)}\Big) \leq C\big[ \text{dist}(z,D) \big]^{1-d}$$
where dist$(z,D)$ is the distance from $z$ to $D$.

With the factorization of the far--field operator $F$, the fact that $T$ is bounded and coercive, and the Funk--Hecke integral identity, we can solve the inverse problem for recovering $D$ by using the decay of the Bessel functions. The factorization of $F$ gives us that 
$$\big| \big(F\phi_z,\phi_z\big)_{L^2(\mathbb{S}^{d-1})}\big|=\big|\big\langle TH\phi_z,H\phi_z\big\rangle_{X,X^*}\big|.$$
This combined with $T$ being bounded and coercive on $\overline{\text{Range}(H)}$, we get
\begin{align*}
    C_1 \|H\phi_z\|^2_{X^*}\leq \big| \big(F\phi_z,\phi_z\big)_{L^2(\mathbb{S}^{d-1})}\big| \leq C_2 \|H\phi_z\|^2_{X^*}
\end{align*}
for some constants $C_1$ and $C_2$. 
Given the above analysis, we have the following result. 
\begin{theorem}
    For the imaging functional $I(z)= \big| \big(F\phi_z,\phi_z \big)_{L^2(\mathbb{S}^{d-1})}\big|^p$ with $p\in \mathbb{N}$, we have
    \begin{align}
        I(z) = \mathcal{O}\Big(  \mathrm{dist}(z,D)^{p(1-d)} \Big) , \label{aicbc-imagingfunctional-innerprod}
    \end{align}
    where $\mathrm{dist}(z,D)$ is the distance from $z$ to $D$.
\end{theorem}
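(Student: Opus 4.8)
The statement is a direct consequence of the three facts assembled just above it, so the plan is simply to let the symmetric factorization, the mapping properties of the middle operator $T$, and the Funk--Hecke decay estimate combine, and then raise the resulting bound to the $p$-th power.

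\textbf{Reduction via the factorization.} By Theorem~\ref{aicbc-thm-symm_factorization}, $F=H^*TH$, and moving $H^*$ across the inner product (using the adjoint relation from the definition of $H^*$) gives
\[
\big(F\phi_z,\phi_z\big)_{L^2(\mathbb{S}^{d-1})}=\big\langle TH\phi_z,H\phi_z\big\rangle_{X,X^*}.
\]
Since $H\phi_z\in\mathrm{Range}(H)\subset\overline{\mathrm{Range}(H)}$, this is a pairing to which the estimates for $T$ on $\overline{\mathrm{Range}(H)}$ apply.

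\textbf{Boundedness (and coercivity) of $T$.} Boundedness of $T:X^*\to X$ follows from the well--posedness of the auxiliary problem \eqref{directaicbc_aux1}--\eqref{directaicbc_aux2}: the solution map $(f,p,h)\mapsto w$ is bounded from $X^*$ into $H^1(B_R\setminus\partial D)$, and the components of $T$ in \eqref{aicbc_Tdef} are built from $w$ and $(f,p,h)$ by bounded multiplication, gradient, and trace operations. This already yields the upper bound $\big|\big(F\phi_z,\phi_z\big)_{L^2(\mathbb{S}^{d-1})}\big|\le C_2\,\|H\phi_z\|_{X^*}^2$. For completeness one also combines Theorem~\ref{aicbc-thm-T=S+K} (the decomposition $\pm T=S+K$ with $\mathrm{Re}(S)$ coercive and $K$ compact on $\overline{\mathrm{Range}(H)}$) with Theorem~\ref{aicbc-thm-Im(T)} ($\mathrm{Im}\langle T(f,p,h),(f,p,h)\rangle>0$ for nonzero $(f,p,h)$ in that subspace) and runs the standard factorization--method weak--compactness argument of \cite[Lemma~1.17]{FM-Book} to obtain $\big|\big\langle T(f,p,h),(f,p,h)\big\rangle_{X,X^*}\big|\ge c\,\|(f,p,h)\|_{X^*}^2$ on $\overline{\mathrm{Range}(H)}$; the resulting lower bound $C_1\|H\phi_z\|_{X^*}^2\le\big|\big(F\phi_z,\phi_z\big)_{L^2(\mathbb{S}^{d-1})}\big|$ is what keeps $I(z)$ from decaying while $z$ stays inside $D$, but is not needed for the $\mathcal{O}$--statement itself.

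\textbf{Funk--Hecke decay and conclusion.} With $\phi_z=\mathrm{e}^{-\mathrm{i}k\hat x\cdot z}$, the identity \eqref{aicbc-Funk-Hecke} writes $v_{\phi_z}$ via $J_0(k|x-z|)$ in $\R^2$ and $j_0(k|x-z|)$ in $\R^3$; using $|J_0(t)|\le Ct^{-1/2}$ and $|j_0(t)|\le Ct^{-1}$ together with the same--order bounds for $J_0'$ and $j_0'$, and taking the supremum over $x\in D$, gives $\|v_{\phi_z}\|_{H^1(D)}\le C\,[\mathrm{dist}(z,D)]^{(1-d)/2}$ and hence $\|H\phi_z\|_{X^*}^2\le C\,[\mathrm{dist}(z,D)]^{1-d}$. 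Plugging this into the upper bound and raising to the $p$-th power,
\[
I(z)=\big|\big(F\phi_z,\phi_z\big)_{L^2(\mathbb{S}^{d-1})}\big|^p\le\big(C_2\,\|H\phi_z\|_{X^*}^2\big)^p\le C\,[\mathrm{dist}(z,D)]^{p(1-d)},
\]
which is exactly $I(z)=\mathcal{O}\big(\mathrm{dist}(z,D)^{p(1-d)}\big)$.

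\textbf{Where the work is.} Granting the earlier theorems, the argument is essentially bookkeeping; the only delicate points are (i) running the coercivity argument on the closed subspace $\overline{\mathrm{Range}(H)}$ rather than all of $X^*$ (the reason Theorem~\ref{aicbc-thm-Im(T)} is phrased on that subspace), and (ii) verifying the Funk--Hecke estimate \emph{including} the $\nabla v_{\phi_z}$ term, i.e.\ checking that differentiating $J_0$ or $j_0$ does not worsen the decay rate, so that the full $H^1(D)$ norm --- not merely the $L^2(D)$ norm --- satisfies the stated bound uniformly in $x\in D$.
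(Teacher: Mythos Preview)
Your proposal is correct and follows essentially the same approach as the paper: use the symmetric factorization $F=H^*TH$ to rewrite $(F\phi_z,\phi_z)$ as $\langle TH\phi_z,H\phi_z\rangle$, invoke boundedness (and, for the companion lower bound, coercivity) of $T$ on $\overline{\mathrm{Range}(H)}$ to sandwich this by $\|H\phi_z\|_{X^*}^2$, then apply the Funk--Hecke decay estimate and raise to the $p$-th power. Your write-up is in fact more explicit than the paper's, which simply records the two-sided inequality in the paragraph preceding the theorem and states the result without a separate proof; your observation that only boundedness of $T$ is needed for the $\mathcal{O}$-statement is also correct.
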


Given the decay rate of our imaging functional, this functional will allow us to get a reconstruction of the scatterer $D$ using the far--field data. In the next section, we will introduce a direct sampling method that will reconstruct our scatterer using the Cauchy data.

\section{Reconstruction via Cauchy Data} \label{aicbc-section-cauchy}
In this section, we introduce and analyze a new imaging functional. This imaging functional will utilize Cauchy data to reconstruct our scatterer $D$. This functional was first introduced in \cite{sampling-with-deeplearning} for an isotropic material and was explored in \cite{liem-paper-rgf} for an anisotropic material. Here, we analyze the same imaging functional for the case where we have an anisotropic material with a conductive boundary. We define the reciprocity gap imaging functional $I_{RG}(z)$ as
\begin{align}
    I_{RG}(z) := \sum_{j=1}^{N} \Bigg|  \int_{\partial \Omega} u^s(x,\hat{y}_j)\partial_\nu \mathrm{Im}\Phi(x,z)-\partial_\nu u^s(x,\hat{y}_j)\mathrm{Im}\Phi(x,z) \,\text{d}s(x)\  \Bigg|^p  ,  \label{aicbc-imagingfunctional-reciprocity}
\end{align}
where $\overline{D}\subset \Omega$, $p\in \mathbb{N}$ and $\Phi(x,y)$ is the free-space Green's Function defined in \eqref{aicbc-fundamentalsol-def}. Here, we assume that $\hat{y}_j$ for $j = 1,\ldots , N$ are unique incident directions on the unit circle/sphere. As seen in \cite{liem-paper-rgf}, $N$ does not need to be large in order to get an accurate reconstruction of the scatterer. Also, provided that the distance $\Omega = B(0,R)$ for $R \gg 1$ we have that $\partial_\nu u^s(x,\hat{y}_j) \approx \text{i} k u^s(x,\hat{y}_j)$ by the radiation condition \eqref{Sommerfield_aicbc}.

Notice that from the definition 
    \begin{align*}
        \mathrm{Im}\Phi(x,z)= \begin{cases}
        \frac{1}{4} J_0(k|x-z|) & \quad \mathrm{in } \,\,\mathbb{R}^2, \\
        \\
        \frac{k}{4\pi} j_0(k|x-z|) & \quad \mathrm{in }  \,\, \mathbb{R}^3 ,
    \end{cases}
    \end{align*}
 we see that $\mathrm{Im}\Phi(x,z)$ is a solution to the Helmholtz equation in $\R^d$. 
We want this imaging functional to be effective at reconstructing $D$. We analyze the behavior of $I_{RG}(z)$ in the following theorems to show a similar decay rate as the distance of the sampling point $z$ moves away from the scatterer. Similar analysis has been done in \cite{liem2}. In order to develop the resolution analysis for this imaging functional, we first use \eqref{directaicbc1}--\eqref{directaicbc2} to write the integrals over $\partial \Omega$ as integrals over $\partial D$. 

\begin{theorem}
    The imaging functional $I_{RG}(z)$ satisfies
    \begin{align}
        I_{RG}(z) = \sum_{j=1}^{N} \Bigg|  \int_{\partial D} &u^s_-(x,\hat{y}_j) \partial_\nu \mathrm{Im}\Phi (x,z) +\mathrm{i}\eta \partial_{\nu_A} \Big(u^s_-(x,\hat{y}_j)+u^i(x,\hat{y}_j) \Big)\partial_\nu \mathrm{Im}\Phi (x,z)\,\mathrm{d}s(x)\ \nonumber \\
        &+ \int_{\partial D} \mathrm{Im}\Phi (x,z)\partial_\nu u^i(x,\hat{y}_j) - \mathrm{Im}\Phi (x,z)\partial_{\nu_A} \Big(u^s_-(x,\hat{y}_j)+u^i(x,\hat{y}_j) \Big) \,\mathrm{d}s(x)\ \Bigg|^p. \label{RG-funcIntsoverD}
    \end{align} 
\end{theorem}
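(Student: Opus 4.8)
The plan is to convert the integrals over $\partial\Omega$ into integrals over $\partial D$ by applying Green's second identity in the bounded region $\Omega\setminus\overline{D}$, and then to eliminate the exterior traces of $u^s$ using the conductive transmission conditions. The key point, already recorded just before the statement, is that $\mathrm{Im}\Phi(\cdot\,,z)$ is an entire solution of the Helmholtz equation (with wave number $k$) on all of $\mathbb{R}^d$, since the singular part of the fundamental solution sits entirely in $\mathrm{Re}\,\Phi$. Hence $\mathrm{Im}\Phi(\cdot\,,z)$ and, by interior elliptic regularity, $u^s(\cdot\,,\hat{y}_j)$ are both solutions of the Helmholtz equation with the same wave number $k$ in $\Omega\setminus\overline{D}$, so the volume term in Green's identity cancels:
\[
\int_{\Omega\setminus\overline{D}} \big( u^s\,\Delta\mathrm{Im}\Phi - \mathrm{Im}\Phi\,\Delta u^s\big) \,\mathrm{d}x = -k^2\int_{\Omega\setminus\overline{D}} \big( u^s\,\mathrm{Im}\Phi - \mathrm{Im}\Phi\,u^s\big)\,\mathrm{d}x = 0.
\]

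Next I would track orientations carefully: the outward unit normal of $\Omega\setminus\overline{D}$ coincides with $\nu$ on $\partial\Omega$ and equals $-\nu$ on $\partial D$, where $\nu$ denotes the outward normal of $D$. Green's identity then yields
\[
\int_{\partial\Omega} u^s\,\partial_\nu\mathrm{Im}\Phi - \mathrm{Im}\Phi\,\partial_\nu u^s\,\mathrm{d}s(x) = \int_{\partial D} u^s_+\,\partial_\nu\mathrm{Im}\Phi - \mathrm{Im}\Phi\,\partial_\nu u^s_+\,\mathrm{d}s(x),
\]
all traces being understood in $H^{1/2}(\partial D)$, $H^{-1/2}(\partial D)$ and the products as the corresponding duality pairings, which is legitimate since $\partial\Omega$, $\partial D$ are Lipschitz and $\mathrm{Im}\Phi(\cdot\,,z)$ is smooth up to both boundaries. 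Writing $u = u^s + u^i$ and using $u_- = u^s_- + u^i$ on $\partial D$ (as $u^i$ has no jump), the conductive conditions \eqref{directaicbc2} become
\[
u^s_+ = u^s_- + \mathrm{i}\eta\,\partial_{\nu_A}\big(u^s_- + u^i\big), \qquad \partial_\nu u^s_+ = \partial_{\nu_A}\big(u^s_- + u^i\big) - \partial_\nu u^i \quad\text{on }\partial D.
\]
Substituting these two relations into the $\partial D$-integral above and regrouping the terms reproduces exactly the integrand in \eqref{RG-funcIntsoverD}; taking absolute values, raising to the $p$-th power, and summing over $j = 1,\dots,N$ completes the argument.

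Since every step is a direct computation, I do not expect a genuine analytical obstacle; the only place that needs care is the bookkeeping — the sign flip of the normal on $\partial D$ caused by using $\Omega\setminus\overline{D}$ as the domain of integration, together with keeping the interior/exterior trace labels ($\pm$) consistent with the co-normal derivative $\partial_{\nu_A} = \nu\cdot A\nabla$. It is also worth remarking at the outset that the identity is valid for every sampling point $z$, because $\mathrm{Im}\Phi(\cdot\,,z)$ never develops a singularity, so no puncturing of the domain or limiting argument around $z$ is required.
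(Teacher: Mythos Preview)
Your proposal is correct and follows essentially the same route as the paper: apply Green's second identity in $\Omega\setminus\overline{D}$ using that both $u^s$ and $\mathrm{Im}\,\Phi(\cdot\,,z)$ solve the Helmholtz equation there, then substitute the conductive transmission conditions to rewrite the exterior traces $u^s_+$, $\partial_\nu u^s_+$ in terms of interior data. Your write-up is in fact slightly more explicit than the paper's about the orientation of the normal on $\partial D$ and about why no limiting argument near $z$ is needed.
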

\begin{proof}
    In order to prove the claim, we use Green's 2nd Identity along with the fact that both $\mathrm{Im}\Phi$ and $u^s$ solve the Helmholtz equation in $\Omega\setminus \overline{D}$. With this, we obtain that 
    \begin{align*}
        0 &= \int_{\Omega\setminus\overline{D}} u^s \Big(\Delta \mathrm{Im}\Phi (\cdot ,z) +k^2\mathrm{Im}\Phi (\cdot ,z) \Big)-\mathrm{Im}\Phi (\cdot ,z) \Big(\Delta u^s +k^2 u^s \Big) \mathrm{d}x \\
        &= \int_{\partial \Omega} u^s\partial_\nu \mathrm{Im}\Phi (\cdot ,z)-\partial_\nu u^s\mathrm{Im}\Phi (\cdot ,z) \,\mathrm{d}s(x)\ - \int_{\partial D} u^s_+\partial_\nu \mathrm{Im}\Phi (\cdot ,z)-\partial_\nu u^s_+ \mathrm{Im}\Phi (\cdot ,z) \,\mathrm{d}s(x).
    \end{align*}
    Therefore, using the boundary conditions 
    $$u^s_+ - u^s_- =  \text{i}\eta (\nu\cdot A\nabla (u^s_-+u^i))  \quad \text{ and } \quad  \partial_\nu (u^s_+ + u^i) = \nu \cdot A\nabla (u^s_-+u^i)  \quad \text{on $\partial D$},$$
    we have that 
    \begin{align*}
        \int_{\partial \Omega} u^s\partial_\nu \mathrm{Im}\Phi (\cdot ,z)-\partial_\nu u^s\mathrm{Im}\Phi (\cdot ,z) \,\mathrm{d}s(x)\ &= \int_{\partial D} u^s_+\partial_\nu \mathrm{Im}\Phi (\cdot ,z)-\partial_\nu u^s_+ \mathrm{Im}\Phi (\cdot ,z) \,\mathrm{d}s(x)\ \\
        &= \int_{\partial D} u^s_- \partial_\nu \mathrm{Im}\Phi (\cdot ,z) + \text{i}\eta \partial_{\nu_A}(u^s_-+u^i)\partial_\nu \mathrm{Im}\Phi (\cdot ,z) \,\mathrm{d}s(x)\  \\
        &+ \int_{\partial D} \mathrm{Im}\Phi (\cdot ,z) \partial_\nu u^i - \mathrm{Im}\Phi (\cdot ,z) \partial_{\nu_A}(u^s_- + u^i) \,\mathrm{d}s(x).
    \end{align*}
    This proves the claim by summing over the incident directions. 
    \end{proof}
This result is useful, because we can now express our imaging functional $I_{RG}(z)$ with data along the boundary of the scatterer $D$ instead of along the boundary of $\Omega$. Given that we have this expression for the imaging functional $I_{RG}(z)$ using data on $\partial D$, we can now analyze the decay rate of this functional.
\begin{theorem}
    For the imaging functional $I_{RG}(z)$, we have
    \begin{align}
        I_{RG}(z) = \mathcal{O}\Big(  \mathrm{dist}(z,D) ^\frac{p(1-d)}{2} \Big) \label{aicbc-OoC-reciprocity}
    \end{align}
    where $\mathrm{dist}(z,D)$ is the distance from $z$ to $D$.
\end{theorem}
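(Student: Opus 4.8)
The plan is to estimate each of the $N$ boundary integrals appearing in \eqref{RG-funcIntsoverD} separately, exploiting the fact that the factors built from the scattered and incident fields on $\partial D$ are \emph{independent of the sampling point} $z$ and uniformly bounded, whereas the factors $\mathrm{Im}\Phi(\cdot,z)$ and $\partial_\nu\mathrm{Im}\Phi(\cdot,z)$ decay as $z$ recedes from $D$. Fix an incident direction $\hat y_j$ and set $u=u^s(\cdot,\hat y_j)+u^i(\cdot,\hat y_j)$. By the well--posedness bound $\|u^s\|_{H^1(B_R\setminus\partial D)}\le C\|u^i\|_{H^1(D)}$ recorded in Section~\ref{acbc2-section-formulation}, together with the Dirichlet and co--normal trace theorems applied to $\nabla\cdot A\nabla u+k^2 n u=0$ in $D$, the traces $u^s_-|_{\partial D}\in H^{1/2}(\partial D)$, $\partial_{\nu_A}u|_{\partial D}\in H^{-1/2}(\partial D)$, and (since $u^i$ is a plane wave) $u^i|_{\partial D}$, $\partial_\nu u^i|_{\partial D}$, all have norms bounded by a constant depending only on $k,A,n,\eta,D$ and $\hat y_j$, and in particular \emph{not} on $z$.

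Next I would quantify the decay of the test function on $\partial D$. For $x\in\partial D$ one has $|x-z|\ge\mathrm{dist}(z,D)$, and from the explicit formulas $\mathrm{Im}\Phi(x,z)=\tfrac14 J_0(k|x-z|)$ in $\R^2$, $\tfrac{k}{4\pi}j_0(k|x-z|)$ in $\R^3$, together with $J_0'=-J_1$, $j_0'=-j_1$ and the bounds $|J_0(t)|,|J_1(t)|\le C t^{-1/2}$, $|j_0(t)|,|j_1(t)|\le C t^{-1}$ for $t$ bounded away from $0$, one obtains pointwise on $\partial D$ that $|\mathrm{Im}\Phi(x,z)|+|\nabla_x\mathrm{Im}\Phi(x,z)|\le C\,[\mathrm{dist}(z,D)]^{(1-d)/2}$ whenever $\mathrm{dist}(z,D)\ge\delta_0$ for a fixed $\delta_0>0$. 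Applying the same bounds to higher derivatives — or, alternatively, interior elliptic estimates for the entire Helmholtz solution $\mathrm{Im}\Phi(\cdot,z)$ on a fixed tubular neighborhood of $\partial D$ followed by the trace theorem — yields $\|\mathrm{Im}\Phi(\cdot,z)\|_{H^{1/2}(\partial D)}\le C\,[\mathrm{dist}(z,D)]^{(1-d)/2}$, and, since $\nu\in L^\infty(\partial D)$ because $\partial D$ is Lipschitz, also $\|\partial_\nu\mathrm{Im}\Phi(\cdot,z)\|_{L^2(\partial D)}\le C\,[\mathrm{dist}(z,D)]^{(1-d)/2}$, hence the same bound in $H^{-1/2}(\partial D)$.

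Combining the two ingredients through the duality pairing $\langle\cdot,\cdot\rangle_{H^{-1/2}(\partial D),H^{1/2}(\partial D)}$ (and the $L^2$ pairing where the data is regular enough), each of the four boundary integrals in \eqref{RG-funcIntsoverD} is bounded in modulus by $C\,[\mathrm{dist}(z,D)]^{(1-d)/2}$; raising to the power $p$ and summing over the finitely many directions $j=1,\dots,N$ preserves the order and gives $I_{RG}(z)=\mathcal O\big([\mathrm{dist}(z,D)]^{p(1-d)/2}\big)$, which is precisely \eqref{aicbc-OoC-reciprocity}. The main technical point — the only step beyond routine bookkeeping — is the $H^{1/2}(\partial D)$ estimate for $\mathrm{Im}\Phi(\cdot,z)$ and $\partial_\nu\mathrm{Im}\Phi(\cdot,z)$, i.e.\ making sure the pairing against the $H^{-1/2}$ co--normal datum $\partial_{\nu_A}u$ genuinely inherits the $[\mathrm{dist}(z,D)]^{(1-d)/2}$ rate uniformly for $z$ in the far region; this is what forces one to use the Bessel asymptotics for the derivatives, not merely pointwise bounds on $\mathrm{Im}\Phi$ itself. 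Note also that, as in the far--field case, the estimate is informative precisely when $\mathrm{dist}(z,D)$ is large, which is the regime in which $I_{RG}$ is used to image $D$.
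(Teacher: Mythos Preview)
Your proof follows the same architecture as the paper's: start from \eqref{RG-funcIntsoverD}, bound each of the four boundary integrals on $\partial D$ via trace theorems and the well--posedness estimate, and then invoke the Bessel decay. The one technical difference is in how the decay of the test function is captured: the paper converts all the boundary norms of $\mathrm{Im}\Phi(\cdot,z)$ and $\partial_\nu\mathrm{Im}\Phi(\cdot,z)$ back into the volume norm $\|\mathrm{Im}\Phi(\cdot,z)\|_{H^2(D)}$ via the trace and Neumann trace theorems, and then quotes the estimate $\|\mathrm{Im}\Phi(\cdot,z)\|_{H^2(D)}\le C\,[\mathrm{dist}(z,D)]^{(1-d)/2}$ from \cite{harris-Biharmonic-dsm}, whereas you work pointwise on $\partial D$ from the Bessel asymptotics.

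There is one imprecision in your version worth fixing. For the term $\int_{\partial D}\mathrm{i}\eta\,\partial_{\nu_A}(u^s_-+u^i)\,\partial_\nu\mathrm{Im}\Phi(\cdot,z)\,\mathrm{d}s$ you place $\partial_{\nu_A}u$ only in $H^{-1/2}(\partial D)$, so the pairing needs $\partial_\nu\mathrm{Im}\Phi(\cdot,z)\in H^{1/2}(\partial D)$; you only claim $L^2(\partial D)$ (and then embed into $H^{-1/2}$, which goes the wrong way). The paper's $H^2(D)$ route gives the required $H^{1/2}(\partial D)$ bound on $\partial_\nu\mathrm{Im}\Phi(\cdot,z)$ for free via the Neumann trace theorem. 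In your framework you can repair this either by pushing your interior elliptic estimate one order higher to get $\partial_\nu\mathrm{Im}\Phi(\cdot,z)\in H^{1/2}(\partial D)$, or --- more simply --- by noting that the jump condition $\llbracket u^s\rrbracket=\mathrm{i}\eta\,\partial_{\nu_A}(u^s_-+u^i)$ with $u^s_\pm\in H^{1/2}(\partial D)$ actually yields $\eta\,\partial_{\nu_A}(u^s_-+u^i)\in H^{1/2}(\partial D)\subset L^2(\partial D)$, so your $L^2$ bound on $\partial_\nu\mathrm{Im}\Phi(\cdot,z)$ already suffices for that term.
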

\begin{proof}
    Recall the following expression of the reciprocity gap functional on $\partial \Omega$
    \begin{align*}
        \int_{\partial \Omega} u^s\partial_\nu \mathrm{Im}\Phi (\cdot ,z)-\partial_\nu u^s \mathrm{Im}\Phi (\cdot ,z) \,\mathrm{d}s(x)\ &= \int_{\partial D} u^s_- \partial_\nu \mathrm{Im}\Phi (\cdot ,z) + \text{i}\eta \partial_{\nu_A}(u^s_-+u^i)\partial_\nu \mathrm{Im}\Phi (\cdot ,z) \,\mathrm{d}s(x)\  \\
        &+ \int_{\partial D} \mathrm{Im}\Phi (\cdot ,z) \partial_\nu u^i - \mathrm{Im}\Phi (\cdot ,z) \partial_{\nu_A}(u^s_-+u^i) \,\mathrm{d}s(x) .
    \end{align*}
    We want to find the bounds for each of these terms on $\partial D$ to eventually get the decay rate of the $I_{RG}(z)$. First, we note that $\mathrm{Im}\Phi (\cdot ,z)$ and $u^i$ are both smooth solutions to the Helmholtz equation in $\mathbb{R}^d$, also
    $$\partial_\nu \mathrm{Im}\Phi (\cdot ,z) \in H^{1/2}(\partial D)\quad \text{and}\quad \nabla \cdot A\nabla (u^s+u^i)+k^2n(u^s+u^i)=0 \quad \text{in }D . $$
    By appealing to Trace Theorems, we are able to estimate the boundary integrals over $\partial D$ as volume integrals in $D$. We will consider the four different integrals that are used to express $I_{RG}(z)$ from \eqref{RG-funcIntsoverD}. 
    
For the first integral, we use the Trace Theorem, Neumann Trace Theorem, and the well-posedness of \eqref{directaicbc1}--\eqref{directaicbc2} to obtain the first estimate 
   \begin{align*}
        |\textbf{I}| = \Bigg| \int_{\partial D} u^s_- \partial_\nu \mathrm{Im}\Phi (\cdot ,z) \,\mathrm{d}s(x)\ \Bigg| &\leq \|u^s_-\|_{H^{1/2}(\partial D)} \|\partial_\nu \mathrm{Im}\Phi (\cdot ,z)\|_{H^{-1/2}(\partial D)}\\
        &\leq C\|u^s\|_{H^1(D)}\Big( \|\Delta \mathrm{Im}\Phi (\cdot ,z)\|_{L^2(D)} + \| \mathrm{Im}\Phi (\cdot ,z)\|_{H^1(D)}  \Big)  \\
        &\leq C\|u^s\|_{H^1(D)} \|\mathrm{Im}\Phi (\cdot ,z)\|_{H^1(D)} \\
        &\leq C\|u^i\|_{H^1(D)} \|\mathrm{Im}\Phi (\cdot ,z)\|_{H^1(D)}.
    \end{align*}
Notice that we have used the fact that $\mathrm{Im}\Phi (\cdot ,z)$ satisfies the Helmholtz equation. Now, for the second integral we use the fact that $\mathrm{Im}\Phi (\cdot ,z)$ along with the Trace Theorems to obtain the estimate
    \begin{align*}
        |\textbf{II}| = \Bigg| \int_{\partial D}  \text{i}\eta \partial_{\nu_A}(u^s_-+u^i)\partial_\nu &\mathrm{Im}\Phi (\cdot ,z) \,\mathrm{d}s(x)\ \Bigg| \leq C\|\partial_{\nu_A}(u^s_-+u^i)\|_{H^{-1/2}(\partial D)}\|\partial_\nu \mathrm{Im}\Phi (\cdot ,z)\|_{H^{1/2}(\partial D)} \\
        &\leq C\Big( \| \nabla \cdot A\nabla (u^s+u^i) \|_{L^2(D)} + \|u^s+u^i\|_{H^1(D)} \Big) \|\mathrm{Im}\Phi (\cdot ,z)\|_{H^2(D)} \\
        &=  C\Big( \| -k^2n (u^s+u^i) \|_{L^2(D)} + \|u^s+u^i\|_{H^1(D)} \Big) \|\mathrm{Im}\Phi (\cdot ,z)\|_{H^2(D)} \\
        &\leq C\|u^i\|_{H^1(D)} \|\mathrm{Im}\Phi (\cdot ,z)\|_{H^2(D)}.
    \end{align*}
Here, we have used the fact that $n$ and $\eta$ are assumed to be bounded functions. For the third integral, we similarly have that 
    \begin{align*}
        |\textbf{III}| = \Bigg|\int_{\partial D} \mathrm{Im}\Phi (\cdot ,z) \partial_\nu u^i \,\mathrm{d}s(x)\ \Bigg| &\leq \|\partial_\nu u^i\|_{H^{-1/2}(\partial D)}\|\mathrm{Im}\Phi (\cdot ,z)\|_{H^{1/2}(\partial D)} \\
        &\leq C \|u^i\|_{H^1(D)}\|\mathrm{Im}\Phi (\cdot ,z)\|_{H^1(D)}. 
    \end{align*}
    Lastly, the fourth integral can be handled as discussed above to get that 
    \begin{align*}
        |\textbf{IV}| = \Bigg|\int_{\partial D} - \mathrm{Im}\Phi (\cdot ,z) \partial_{\nu_A}(u^s_-+u^i) \,\mathrm{d}s(x)\ \Bigg| &\leq \|\partial_{\nu_A}(u^s_-+u^i)\|_{H^{-1/2}(\partial D)}\|\mathrm{Im}\Phi (\cdot ,z)\|_{H^{1/2}(\partial D)} \\
        &\leq C \|u^i\|_{H^1(D)}\|\mathrm{Im}\Phi (\cdot ,z)\|_{H^1(D)} .
    \end{align*}
Using the fact that $\|u^i\|_{H^1(D)} \leq \|u^i\|_{H^1(\Omega)}$ and that it is bounded independent of $D$, we get that 
    \begin{align*}
        |\textbf{I}| + |\textbf{II}| + |\textbf{III}| + |\textbf{IV}| &\leq C\|u^i\|_{H^1(D)}\|\mathrm{Im}\Phi (\cdot ,z)\|_{H^2(D)} \\
        &\leq C\|\mathrm{Im}\Phi (\cdot ,z)\|_{H^2(D)}    
    \end{align*}
    Now, from \cite{harris-Biharmonic-dsm} we have that 
    $$   \|\mathrm{Im}\Phi (\cdot ,z)\|_{H^2(D)}  \leq C\big[ \text{dist}(z,D) \big]^\frac{1-d}{2} .$$
    We conclude with the estimate 
    \begin{align*}
        I_{RG}(z) = \sum_{j=1}^{N} \Big| \textbf{I} + \textbf{II} + \textbf{III} + \textbf{IV} \Big|^p \leq C\big[ \text{dist}(z,D) \big]^\frac{p(1-d)}{2} ,
    \end{align*}
    proving the claim.
\end{proof}
This theorem tells us that our imaging functional $I_{RG}(z)$ is valid for reconstructing our scatterer $D$ using the Cauchy data via the decay rate of the Bessel functions. In the next section, we will provide some numerical examples for our direct sampling methods using both the far--field data and the Cauchy data, in two dimensions.

\section{Numerical Results} \label{aicbc-section-numerics}
In the previous sections, we have proved that the aforementioned direct sampling methods are useful for reconstructing our unknown scatterer $D$. In this section, we provide some numerical examples for our direct sampling methods to validate our theoretical results. In the next subsection, we will assume that $D = B(0,R)$, $A$ is a constant matrix with $n>0$ and $\eta\in\mathbb{C}$ being constants. In this setting, we can use separation of variables to compute the synthetic data to test our algorithms and to verify the theoretical results. Then, we will derive a system of boundary integral equations to compute the synthetic data where we assume that $D$ is a non-circular region, $A$ is a $2\times 2$ positive definite matrix, $\eta\in\mathbb{C}$, and $n>0$. We then perform reconstructions for non-circular regions using this data with various levels of noise.

\subsection{Seperation of Variables} \label{aicbc-subsection-numerics_sepvars}
In this subsection, we want to provide numerical examples for our direct sampling methods for $D=B(0,R)$ i.e. the ball of radius $R$ centered at $0$. We assume $A=aI$, where the constant $a\in \mathbb{C}$, along with $n>0$ and $\eta\in\mathbb{C}$ being constants. Now, (\ref{directaicbc1})--(\ref{directaicbc2}) can be written as 
\begin{align*}
    \Delta u^s + k^2u^s = 0 \quad \text{in $\mathbb{R}^2 \setminus \overline{D}$} \quad \text{ and } \quad  \Delta u+\frac{n}{a}k^2u=0 \quad &\text{in $D$}  \\ 
    (u^s+u^i)^+ = u^- +  \text{i}\eta a\partial_r u^-  \quad \text{ and } \quad  \partial_r (u^s+u^i)^+ = a\partial_r u^-   \quad &\text{on $\partial D$}
\end{align*}
along with the Sommerfeld radiation condition as $|x| \longrightarrow \infty$ for the scattered field $u^s$. We can use the Jacobi-Anger expansion to express the incident field $u^i=\text{e}^{ \text{i} k x \cdot \hat{y}}$ as
$$u^i(x,\hat{y}) = u^i(r,\theta) = \sum_{p\in \mathbb{Z}} \text{i}^p J_p(kr)\text{e}^{\text{i}p(\theta - \phi)}$$
where $x= r \big(\cos(\theta),  \sin(\theta) \big)$ and $\hat{y} = \big(\cos(\phi), \sin(\phi) \big)$.

To solve for the scattered field $u^s$ and the total field $u$, we make the assumption that they can be written as the following series expansions
\begin{align*}
u^s(x,\hat{y}) = u^s(r,\theta) = \sum_{p\in \mathbb{Z}} \text{i}^p \text{u}^s_p H_p^{(1)}(kr)\text{e}^{\text{i}p(\theta - \phi)} \quad \text{for $r>R$, and } \\
u(x,\hat{y})= u(r,\theta)= \sum_{p\in \mathbb{Z}} \text{i}^p \text{u}_p J_p\Big( k\sqrt{\frac{n}{a}}r\Big)\text{e}^{\text{i}p(\theta - \phi)} \quad \text{for $0\leq r<R$}
\end{align*}
where $H^{(1)}_p$ are Hankel functions of the first kind of order $p$ and $J_p$ are Bessel functions of the first kind of order $p$. Notice, that the above expansions satisfy the PDEs above, and we only need to satisfy the boundary conditions. Here $\text{u}^s_p$ and $\text{u}_p$ can be seen as the coefficients that are to be determined by using the boundary conditions at $r=R$. Therefore, we have that $\text{u}^s_p$ and $\text{u}_p$ satisfy the $2 \times 2$ linear system  
\begin{align*}
     \begin{pmatrix}
        H_p^{(1)}(kR) & -\text{i}\eta k\sqrt{na}J'_p\Big( k\sqrt{\frac{n}{a}}R\Big)-J_p\Big( k\sqrt{\frac{n}{a}}R\Big) \\
        kH_p^{(1)'}(kR) & -k\sqrt{na}J'_p\Big( k\sqrt{\frac{n}{a}}R\Big)
    \end{pmatrix} 
    \begin{pmatrix}
        \text{u}^s_p \\
        \text{u}_p
    \end{pmatrix} 
    =
    \begin{pmatrix}
        -J_p(kR) \\
        -kJ'_p(kR) 
    \end{pmatrix}
\end{align*}
for each $p \in \Z$. We can solve for the coefficients $\text{u}^s_p$ by using Cramer's Rule, we then obtain
   $$ \text{u}^s_p = \frac{\text{det}(M_x)}{\text{det}(M)} \quad \text{ with matrices } \quad M=
    \begin{pmatrix}
        H_p^{(1)}(kR) & -\text{i}\eta k\sqrt{na}J'_p\Big( k\sqrt{\frac{n}{a}}R\Big)-J_p\Big( k\sqrt{\frac{n}{a}}R\Big) \\
        kH_p^{(1)'}(kR) & -k\sqrt{na}J'_p\Big( k\sqrt{\frac{n}{a}}R\Big)
    \end{pmatrix} $$
and 
$$ M_x=
    \begin{pmatrix}
        -J_p(kR) & -\text{i}\eta k\sqrt{na}J'_p\Big( k\sqrt{\frac{n}{a}}R\Big)-J_p\Big( k\sqrt{\frac{n}{a}}R\Big) \\
        -kJ'_p(kR) & -k\sqrt{na}J'_p\Big( k\sqrt{\frac{n}{a}}R\Big)
    \end{pmatrix} .$$

With this computation of $\text{u}^s_p$, we get expressions for the scattered field, its normal derivative (i.e. with respect to $r$), and the far--field pattern. These are given by
\begin{align}
    u^s(x,\hat{y}) = u^s(r,\theta) \approx \sum_{|p|=0}^{15} \text{i}^p \text{u}^s_p H_p^{(1)}(kr)\text{e}^{\text{i}p(\theta - \phi)} \quad \text{and} \quad \partial_r u^s(r,\theta) \approx k \sum_{|p|=0}^{15} \text{i}^p \text{u}^s_p H_p^{(1)'}(kr)\text{e}^{\text{i}p(\theta - \phi)} \label{aicbc-scatt-series} 
\end{align}
and 
\begin{align}
    u^\infty(\hat{x},\hat{y}) = u^\infty(\theta) \approx  \frac{4}{\text{i}} \sum_{|p| = 0}^{15} \text{u}^s_p \text{e}^{\text{i}p(\theta - \phi)}\quad \text{ where } \quad \text{u}^s_p = \frac{\text{det}(M_x)}{\text{det}(M)}. \label{aicbc-farfieldpatterB_R}
\end{align}
In our numerical experiments, we use the above approximations of the scattered field, normal derivative of the scattered field, and the far-field pattern. 

We need an approximation for the far--field operator in order to perform the direct sampling methods with the far--field data. We approximate the integral representation of $F$ with a 64--point Riemann sum which leads to the $64 \times 64$ matrix discretization of the far--field operator given by 
\begin{align*}
    \textbf{F} = \left[ \frac{2\pi}{64} u^\infty (\hat{x}_i ,\hat{y}_j ) \right]_{i,j=1}^{64} .
\end{align*}
In order to test the stability, we will add random noise to the discretized far--field operator $\mathbf{F}$ such that
$$\mathbf{F}_{\delta}=\Big[ \mathbf{F}({i,j}) \big(1+\delta\mathbf{E}({i,j}) \big)\Big]^{64}_{i,j=1}\hspace{.5cm}\text{where}\hspace{.5cm} \| \mathbf{E}\|_2=1.$$
Here, the matrix $\mathbf{E} \in \C^{64 \times 64}$ is taken to have uniformly distributed random entries and $0<\delta < 1$ is the relative noise level added to the data. The entries are initially taken such that the real and imaginary parts are uniformly distributed between $[-1,1]$, then the matrix is normalized.

To get our reconstructions using the far--field data, we begin by discretizing a sampling region $[-2,2]\times[-2,2]$ with $100^2$ equally spaced sampling points. We then move our sampling point $z\in [-2,2]\times[-2,2]$ and plot our imaging functional
\begin{align}
    W(z)=\Big| \overline{\bm{\phi}_z} \cdot \textbf{F}_\delta \bm{\phi}_z \Big|^p\quad\text{ where }\quad \bm{\phi}_z=\Big( \text{e}^{- \text{i}k\hat{x}_{1}\cdot z}, \ldots \, , \text{e}^{- \text{i}k\hat{x}_{64}\cdot z} \Big)^\top ,\label{aicbc-numerics-IPfunctional}
\end{align}
for 
$$\hat{x}_i = \hat{y}_i = (\cos(\theta_i),\sin(\theta_i)) \quad \text{ and} \quad {\theta_i}=2\pi(i-1)/64, \quad \text{ for } i=1,\ldots,64. $$
The contour plot of $W(z)$ over $[-2,2]\times[-2,2]$ will serve as our reconstruction of $D$. 

To perform the direct sampling methods with the Cauchy data, we need approximations for the scattered field and the normal derivative of the scattered field. We approximate the representation of the Cauchy data as $64 \times 64$ matrices given by 
\begin{align*}
    \textbf{us} = \left[ u^s ({x}_i ,\hat{y}_j ) \right]_{i,j=1}^{64} \quad \text{and}\quad \textbf{dus} = \left[ \partial_\nu u^s ({x}_i ,\hat{y}_j ) \right]_{i,j=1}^{64} .
\end{align*}
We test the stability by adding random noise to these operators, given by
\begin{align*}
    \textbf{us}_\delta = \left[ \mathbf{us}({i,j}) \big(1+\delta\mathbf{E}^{(1)}({i,j}) \big) \right]_{i,j=1}^{64} \quad \text{and}\quad \textbf{dus}_\delta = \left[ \mathbf{dus}({i,j})\big(1+\delta\mathbf{E}^{(2)}({i,j}) \big) \right]_{i,j=1}^{64} 
\end{align*}
where $\textbf{E}^{(j)}$ for $j=1,2$ is defined similarly as above. To get our reconstructions using the Cauchy data, we again discretize the rectangular sampling region $[-2,2]\times[-2,2]$ that completely encompasses our region $D=B(0,R)$ i.e. $0<R<2$. Here, the region  $\Omega = B(0,3)$ and the integrals over $\partial \Omega$ are discretized via a 64--point Riemann sum approximation. We then move our sampling point $z$ in the  region $[-2,2]\times[-2,2]$ and plot our imaging functional
\begin{align}
    W_{RG}(z)= \sum_{j=1}^{64}\frac{2\pi}{64}\Bigg|  \textbf{us}_\delta(:,j) \cdot \partial_r J_0(k|\Vec{x}-z|)- \textbf{dus}_\delta(:,j) \cdot J_0(k|\Vec{x}-z|) \Bigg|^p \label{aicbc-numerics-RGfunctional} 
\end{align}
where 
\begin{align*}
    \partial_r J_0(k|\Vec{x}-z|)\Big|_{\partial B(0,3)} = -kJ_1(k|\Vec{x}-z|)\Bigg[ \frac{9-\Vec{x}\cdot z}{3|\Vec{x}-z|} \Bigg] ,
\end{align*}
for $\Vec{x} = (x_1 , \cdots , x_{64})^\top \in \mathbb{R}^{64\times 2}$ with 
$${x}_i = 3 (\cos(\theta_i),\sin(\theta_i))  \quad \text{ and } \quad  \hat{y}_i = (\cos(\theta_i),\sin(\theta_i)) \quad \text{ where } \quad {\theta_i}=2\pi(i-1)/64 . $$
for $i=1,...,64$. This is a discretized approximation of the reciprocity gap functional given by the expression (\ref{aicbc-imagingfunctional-reciprocity}) in 2-dimensions.
\begin{example}
    We provide two reconstructions for $D=B(0,1)$ with $5\%$ noise, i.e. $\delta=0.05$. One reconstruction uses the far--field data and the other uses the Cauchy data. Here, we use the parameters $k = 2\pi$, $a = 1/2-\mathrm{i}$, $n = 2+\mathrm{i}$, and $\eta = 2$.
\end{example}
  \begin{figure}[H]
        \centering 
        \includegraphics[scale=0.50]{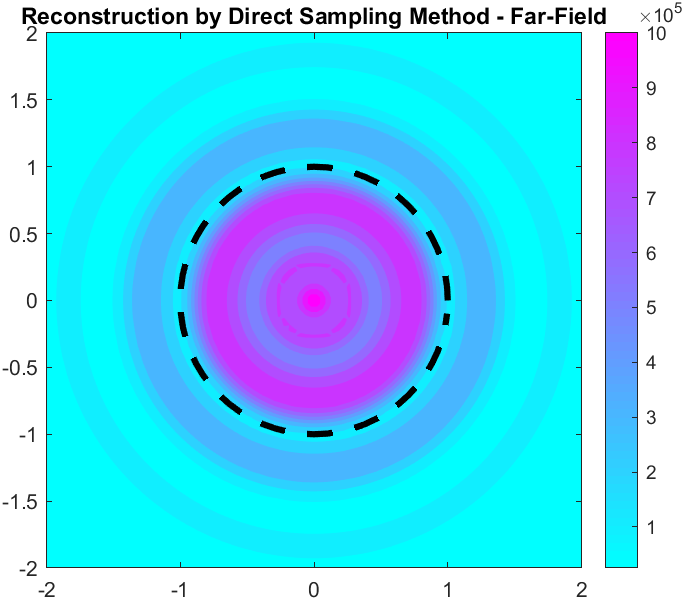} \hspace{0.2in} \includegraphics[scale=.50]{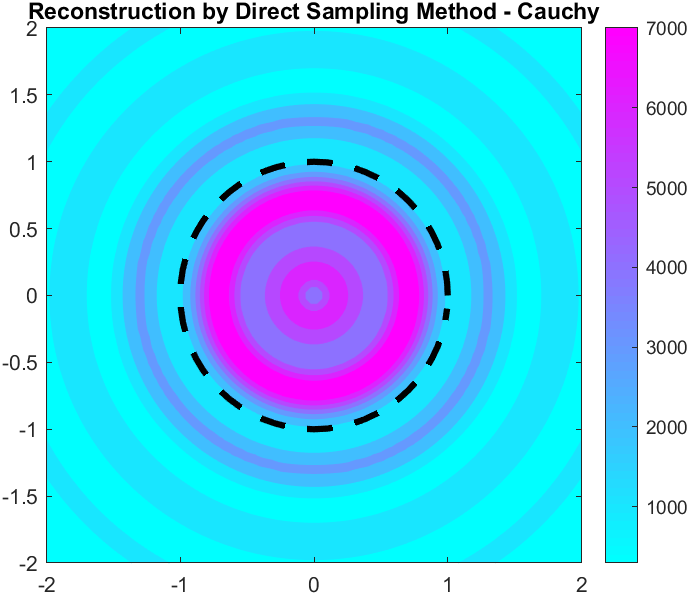}
        \caption{ Reconstruction of circular domain with $5\%$ noise via far--field data with $p=2$ (left) and Cauchy data with $p=3$ (right).}
        \label{aicbc_reconst_B(0,1)}
    \end{figure}
\begin{example}
    We now provide two reconstructions for $D=B(0,3/4)$ with $5\%$ noise, i.e. $\delta=0.05$. One reconstruction uses the far--field data and the other uses the Cauchy data. Here, we use the parameters $k = 4$, $a = 2$, $n = 1/2$, and $\eta = 1$.
\end{example}
  \begin{figure}[H]
        \centering 
        \includegraphics[scale=0.50]{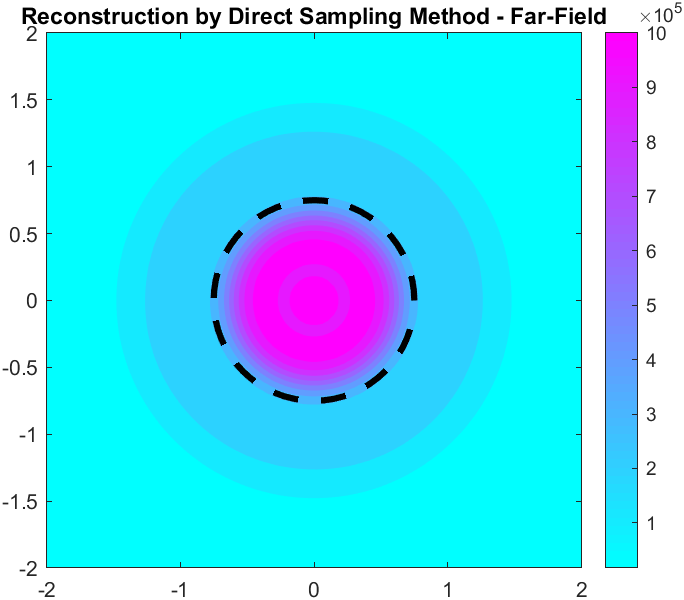} \hspace{0.2in} \includegraphics[scale=.50]{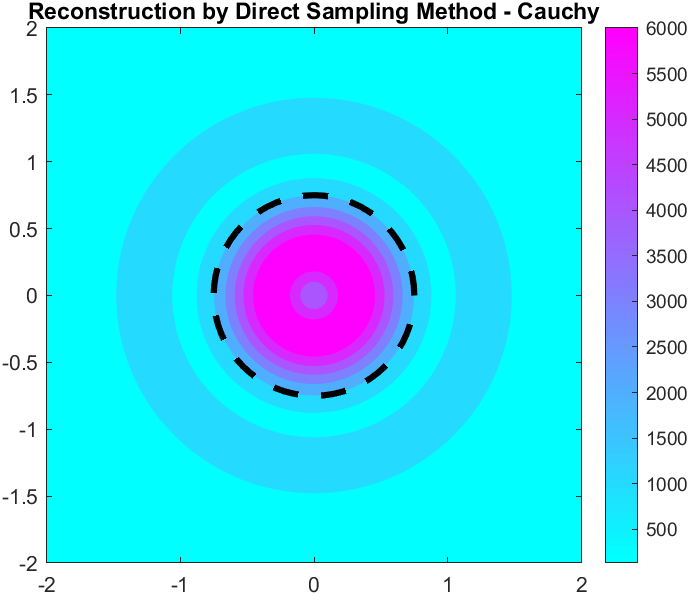}
        \caption{ Reconstruction of circular domain with $5\%$ noise via far--field data with $p=2$ (left) and Cauchy data with $p=3$ (right). }
        \label{aicbc_reconst_B(0,.75)}
    \end{figure}
    
From these examples, we see that our direct sampling imaging functionals are good at recovering the scatterer. Now, we will proceed in the next section by providing more numerical examples for more complex scatterers. Then, we will see the applicability of our methods for different kinds of scatterers.

\subsection{System of Boundary Integral Equations} \label{aicbc-subsection-numerics_boundaryintegral}
In this section, we will provide more examples for our imaging functionals. Here, we will focus on more complex scatterers to test the robustness of our inversion algorithms. To this end, we begin by deriving a system of boundary integral equations for the direct problem which is given by, find $u^s\in H_{\mathrm{loc}}^1(\mathbb{R}^2\backslash \overline{D})$ and $u\in H^1(D)$ satisfying
\begin{align*}
\grad \cdot A\nabla u+k^2 u=0 \quad&\text{in}\quad D\, \quad \text{ and } \quad \Delta u^s+k^2 u^s=0 \quad\text{in}\quad\mathbb{R}^2\backslash \overline{D}\,,\\
\partial_\nu u^s-\partial_{\nu_A}u=-\partial_\nu u^i\quad&\text{on}\quad\partial D\, \quad \text{ and } \quad u^s-u-\mathrm{i}\eta \partial_{\nu_A}u=-u^i \quad\text{on}\quad \partial D\,
\end{align*}
along with the radiation condition where $D$, $A$, $k$, and $\eta$ satisfy the assumptions made in Section \ref{aicbc-section-intro}. For simplicity, we assume that $n=1$ for all the examples in this section. 


We could use the same technique as in \cite{kleefeldcolton17} to derive the corresponding boundary integral equation for the problem above. However, the result would involve the hypersingular boundary integral operator (see \cite[p. 143]{kleefeldcolton17}). Therefore, we use the technique as used in \cite[Section 5.1]{ayalaharriskleefeld2024} and \cite[Section 5.1]{ayalaharriskleefeldex2024} for the isotropic case with a conductive boundary involving one or two parameters, respectively, to the anisotropic case at hand. 
Therefore, we make the ansatz
\begin{align}
 u^s(x)\quad&=\quad\mathrm{SL}_{k}\varphi(x)\,,\quad x\in\mathbb{R}^2 \backslash \overline{D}\,,\label{start1}\\
 u(x)\quad&=\quad\widetilde{\mathrm{SL}}_{k}\psi(x)\,,\qquad x\in D\,,
\label{start2}
\end{align}
where the single layer operators are defined by
\begin{align*}
\mathrm{SL}_{k}\phi(x)\quad&=\quad\int_{\partial D}\Phi(x,y)\phi(y)\,\mathrm{d}s(y) \quad \text{ and } \quad 
\widetilde{\mathrm{SL}}_{k}\phi(x)=\int_{\partial D}\widetilde{\Phi}(x,y)\phi(y)\,\mathrm{d}s(y)\,,\quad  x\notin \partial D\,,
\end{align*}
with $\Phi(x,y)$ the fundamental solution of the Helmholtz equation in two dimensions and 
$$\widetilde{\Phi}(x,y)=\Phi_k(A^{-1/2}x,A^{-1/2}y)/\det\left(A^{1/2}\right)$$ 
the fundamental solution of the modified Helmholtz equation $\grad \cdot A \nabla u+k^2 u=0$ in two dimensions (see \cite[Lemma 2.1]{coltonkressmonk1997}). Using the jump relations \cite[Lemma 2.2]{coltonkressmonk1997}, we have on the boundary 
\begin{align}
    u^s(x)\quad&=\quad\mathrm{S}_{k}\varphi(x)\,,\quad x\in \partial D\,,\label{suppe3}\\
    u(x)\quad&=\quad\widetilde{\mathrm{S}}_{k}\psi(x)\,,\quad x\in \partial D\,,\label{suppe4}
\end{align}
where the single-layer boundary integral operators are defined as
\begin{align*}
    \mathrm{S}_{k}\phi(x)=\int_{\partial D}\Phi(x,y)\phi(y)\,\mathrm{d}s \quad \text{ and } \quad \widetilde{\mathrm{S}}_{k}\phi(x)=\int_{\partial D}\widetilde{\Phi}(x,y)\phi(y)\,\mathrm{d}s\,,\qquad x\in \partial D\,.
\end{align*}
Taking the normal and co-normal derivative of (\ref{start1}) and (\ref{start2}), respectively, we further have on the boundary using the jump relation \cite[Lemma 2.2]{coltonkressmonk1997}
\begin{align}
\partial_{\nu(x)} u^s(x)\quad&=\quad-\frac{1}{2}\varphi(x)+\mathrm{K}'_{k}\varphi(x)\,,\quad x\in \partial D\,,\label{suppe1}\\
    \partial_{\nu_A(x)}u(x)\quad&=\quad\phantom{-}\frac{1}{2}\psi(x)+\widetilde{\mathrm{K}}'_{k}\psi(x)\,,\quad x\in \partial D\,,\label{suppe2}
\end{align}
where the normal and co-normal derivative of the single-layer boundary integral operator are defined as
\begin{align*}
    \mathrm{K}'_{k}\phi(x)=\int_{\partial D}\partial_{\nu(x)}\Phi(x,y)\phi(y)\,\mathrm{d}s \quad \text{ and } \quad 
    \widetilde{\mathrm{K}}'_{k}\phi(x)=\int_{\partial D}\partial_{\nu_A(x)}\widetilde{\Phi}(x,y)\phi(y)\,\mathrm{d}s\,,\qquad x\in \partial D\,.
\end{align*}
Because of the first boundary condition $\partial_\nu u^s-\partial_{\nu_A}u=-\partial_\nu u^i$ and using (\ref{suppe1}) and (\ref{suppe2}), we obtain the first boundary integral equation
\begin{align}
 \left(-\frac{1}{2}I+\mathrm{K}'_{k}\right)\varphi-\left(\frac{1}{2}I+\widetilde{\mathrm{K}}'_{k}\right)\psi=-\partial_\nu u^i\,.
 \label{first}
\end{align}
Because of the second boundary condition $u^s-u-\mathrm{i}\eta \partial_{\nu_A}u=-u^i$ and using (\ref{suppe3}), (\ref{suppe4}), and (\ref{suppe2}) yields
\begin{align}
 \mathrm{S}_{k}\varphi-\widetilde{\mathrm{S}}_{k}\psi-\mathrm{i}\eta \left(\frac{1}{2}I+\widetilde{\mathrm{K}}'_{k}\right)\psi=-u^i\,.
 \label{second}
\end{align}
After we solve the $2\times 2$ system
\begin{align}
\begin{pmatrix}
-\frac{1}{2}I+\mathrm{K}'_{k} \quad \quad \quad \quad \quad \quad -\frac{1}{2}I-\widetilde{\mathrm{K}}'_{k}\\
\quad \mathrm{S}_{k} \quad \quad \quad \quad \quad \quad -\widetilde{\mathrm{S}}_{k}-\mathrm{i}\eta \left(\frac{1}{2}I+\widetilde{\mathrm{K}}'_{k}\right)
\end{pmatrix}
\begin{pmatrix}
\varphi \\
\psi
\end{pmatrix}
=
-\begin{pmatrix}
\partial_\nu u^i \\
u^i
\end{pmatrix}
\label{system}
\end{align}
for $\varphi$ and $\psi$ on $\partial D$, we obtain the scattered field $u^s$ by computing (\ref{start1}). We could also compute the total field $u$ inside $D$ by computing (\ref{start2}).
The far--field pattern of $u^s$ is given by
$$u^\infty(\hat{x})=\mathrm{S}_k^\infty \varphi (\hat{x})\,,$$
where 
\begin{align}
\mathrm{S}^\infty_k\phi(\hat{x})=\int_{\partial D} \mathrm{e}^{-\mathrm{i}k \hat{x}\cdotp y}\phi(y)\,\mathrm{d}s(y)\,,\quad \hat{x}\in \mathbb{S}^1\,.
\label{farfieldexpression}
\end{align}

Now that we have a method for compute the scattered field and far--field pattern, we let $N_f$ be the number of faces used within the boundary element collocation method. Then, the number of collocation nodes is $3\cdotp N_f$. In order to determine the validity of our boundary integral equations, we will compare the calculations with the separation of variable results from the previous section. To this end, for $64$ equidistant incident and observation directions we let ${\bf F}_k\in \mathbb{C}^{64 \times 64}$ the far--field data obtained with the series expansion (\ref{aicbc-farfieldpatterB_R}) and ${\bf F}_k^{(N_f)}\in \mathbb{C}^{64 \times 64}$ be the far--field data obtained via the boundary element collocation method with $N_f$ faces using (\ref{farfieldexpression}). Note, that we make the dependance on the wave number $k$ explicit. The absolute error is defined as
$$\varepsilon_k^{(N_f)} :=\|{\bf F}_k-{\bf F}_k^{(N_f)}\|_2 .$$
Note that the absolute error also depends on the physical parameter $a$ and $\eta$. 

The absolute error of the computed far--field data is shown in Table \ref{tablefarfield} where $D=B(0,2)$ using a disk and the physical parameters $a=3$ and $\eta=1$. The wave numbers are $k=2$, $k=4$, and $k=6$. As we can observe, we obtain very accurate results using $240$ collocation nodes.

\begin{table}[!ht]
\centering
 \begin{tabular}{r|r|r|r|}
  $N_f$ & $\varepsilon_2^{(N_f)}$ & $\varepsilon_4^{(N_f)}$ & $\varepsilon_6^{(N_f)}$ \\
  \hline 
10 (\phantom{1}30) & 0.47427 & 9.22067 & 162.92010\\
20 (\phantom{1}60) & 0.03533 & 0.35907 & 5.32643\\
40 (120) & 0.00405 & 0.00273 & 0.02023\\
80 (240)& 0.00048 & 0.00030 & 0.00078\\
160 (480)& 0.00006 & 0.00004 & 0.00009\\
\hline
 \end{tabular}
 \caption{\label{tablefarfield}Absolute error of the far-field with 64 equidistant incident and observation directions for the disk with radius $R=2$ and the physical parameters $a=3$ and $\eta=1$ for varying number of faces (collocation nodes). The wave numbers are $k=2$, $k=4$, and $k=6$, respectively.}
\end{table}

Likewise, we can compute the scattered field for on $\partial \Omega =\partial B(0,3)$, where the scatterer is again given by $D=B(0,2)$ via the series expansion in (\ref{aicbc-scatt-series}). Just as in the previous section, we compute the scattered field as a $64\times 64$ matrix with $64$ equally spaced points for $x \in \partial  \Omega$ and $\hat{y} \in \mathbb{S}^1$.  As we can observe in Table \ref{tablescatteredfield}, we obtain very accurate results using $240$ collocation nodes.
\begin{table}[!ht]
\centering
 \begin{tabular}{r|r|r|r|}
  $N_f$ & $\varepsilon_2^{(N_f)}$ & $\varepsilon_4^{(N_f)}$ & $\varepsilon_6^{(N_f)}$ \\
  \hline 
10 (\phantom{1}30) &  0.04888 & 0.69465 & 8.75189\\
20 (\phantom{1}60) & 0.00289 & 0.02705 & 0.28886\\
40 (120) & 0.00033 & 0.00016 & 0.00114\\
80 (240)&  0.00004 & 0.00002 & 0.00004\\
160 (480)& 0.00000 & 0.00000 & 0.00000\\
\hline
 \end{tabular}
 \caption{\label{tablescatteredfield}Absolute error of the scattered field with 64 equidistant incident and observation directions on a circle with measurement radius $R_0=3$ for the disk with radius $R=2$ and the physical parameters $a=3$ and $\eta=1$ for varying number of faces (collocation nodes). The wave numbers are $k=2$, $k=4$, and $k=6$, respectively.}
\end{table}

In a similar fashion, we can compute the normal derivative of the scattered field on a given circle with the series expansion (\ref{aicbc-scatt-series}). The computation with the boundary element collocation method is done by taking the normal derivative of (\ref{suppe3}), i.e.
\begin{align*}
\partial_{\nu(x)}u^s(x)=\partial_{\nu(x)}\mathrm{S}_{k}\varphi(x)\,,\quad x\in \partial B(0,3)\,,
\end{align*}
where $\varphi$ is computed by (\ref{system}). Again, we let $\partial \Omega =\partial B(0,3)$ where the scatterer is again given by $D=B(0,2)$ with the normal derivative of the scattered field given as a $64\times 64$ matrix with $64$ equally spaced points for $x \in \partial  \Omega$ and $\hat{y} \in \mathbb{S}^1$. As we can observe in Table \ref{tablendscatteredfield}, we obtain very accurate results using $240$ collocation nodes.
\begin{table}[!ht]
\centering
 \begin{tabular}{r|r|r|r|}
  $N_f$ & $\varepsilon_2^{(N_f)}$ & $\varepsilon_4^{(N_f)}$ & $\varepsilon_6^{(N_f)}$ \\
  \hline 
10 (\phantom{1}30) &  0.06372 & 1.65238 & 40.38222\\
20 (\phantom{1}60) & 0.00575 & 0.06435 & 1.30574\\
40 (120) & 0.00066 & 0.00061 & 0.00478\\
80 (240)&  0.00008 & 0.00007 & 0.00022\\
160 (480)& 0.00001 & 0.00001 & 0.00003\\
\hline
 \end{tabular}
 \caption{\label{tablendscatteredfield}Absolute error of the normal derivative of the scattered field with 64 equidistant incident and observation directions on a circle with measurement radius $R_0=3$ for the disk with radius $R=2$ and the physical parameters $a=3$ and $\eta=1$ for varying number of faces (collocation nodes). The wave numbers are $k=2$, $k=4$, and $k=6$, respectively.}
\end{table}

With this, we see that the our system of boundary integral equations can accurately compute the scattering data. Therefore, we can provide some numerical examples using the data computed via the boundary integral equations as the synthetic data in order to test our new reconstruction algorithms. Just as in the previous section, the imaging functionals are discretized in the exact same way given by \eqref{aicbc-numerics-IPfunctional} and \eqref{aicbc-numerics-RGfunctional}. More precisely, the synthetic data is computed for different obstacles using the boundary element collocation method with $80$ faces (with $240$ collocation nodes). The physical parameters are 
$$\eta=1, \quad k=6, \quad \text{and } \quad A=[4\; \; \; 1\; ; \; 1\; \; \;4].$$ 
The boundaries of the obstacles under consideration are given in polar coordinates as follows: the kite is given by $$(-1.5\cdot\sin(\phi),\cos(\phi)+0.65\cos(2\phi)-0.65)^\top,$$ 
and the peanut is given by 
$$2\sqrt{\frac{1}{2}\sin^2(\phi)+\frac{1}{10}\cos^2(\phi)}\cdotp (\cos(\phi),\sin(\phi))^\top.$$ 
We provide reconstruction of these shapes using the far--field data and Cauchy data with various levels of noise.
\begin{example}
    We provide four reconstructions for the kite shape via the far--field data with $5\%$ noise, $10\%$ noise, $15\%$ noise, and $20\%$ noise, i.e. $\delta=0.05$, $\delta=0.1$, $\delta=0.15$, and $\delta=0.2$, respectively.
\end{example}
\begin{figure}[H]
        \centering 
        \includegraphics[scale=0.50]{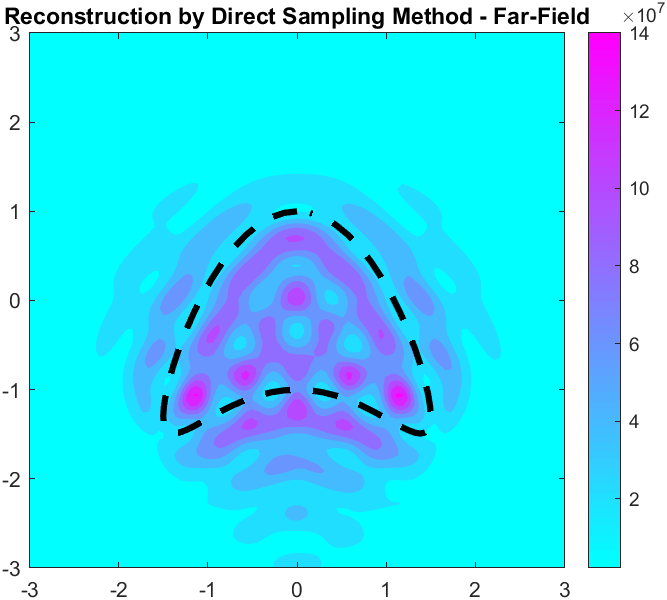} \hspace{0.2in} \includegraphics[scale=.50]{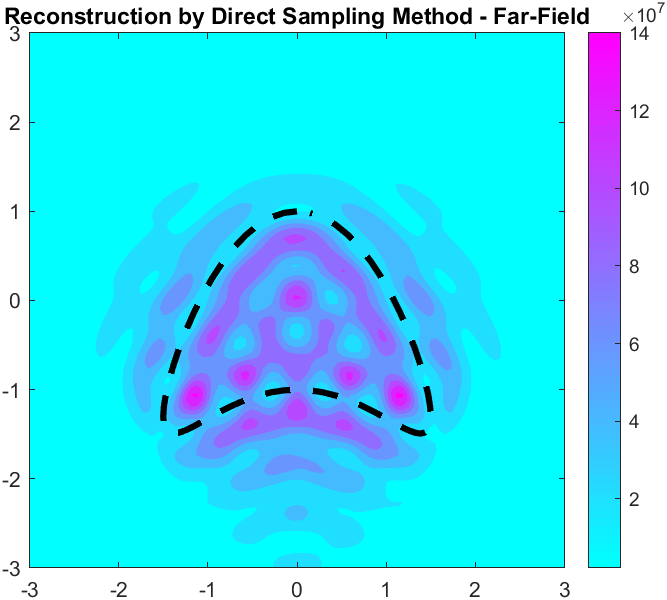}
        \caption{ Reconstruction of the kite shape via far--field data with $5\%$ noise (left) and $10\%$ noise (right), each with $p=2$.}
        \label{aicbc_reconst_kite-5,10}
    \end{figure}
\begin{figure}[H]
        \centering 
        \includegraphics[scale=0.50]{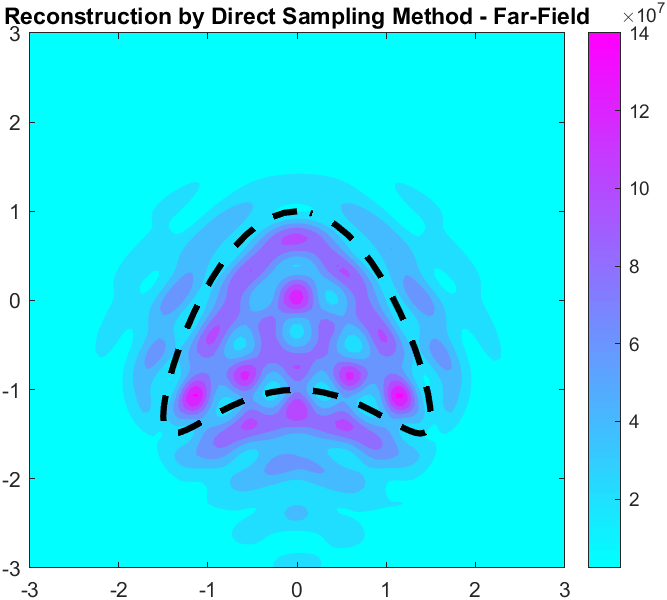} \hspace{0.2in} \includegraphics[scale=.50]{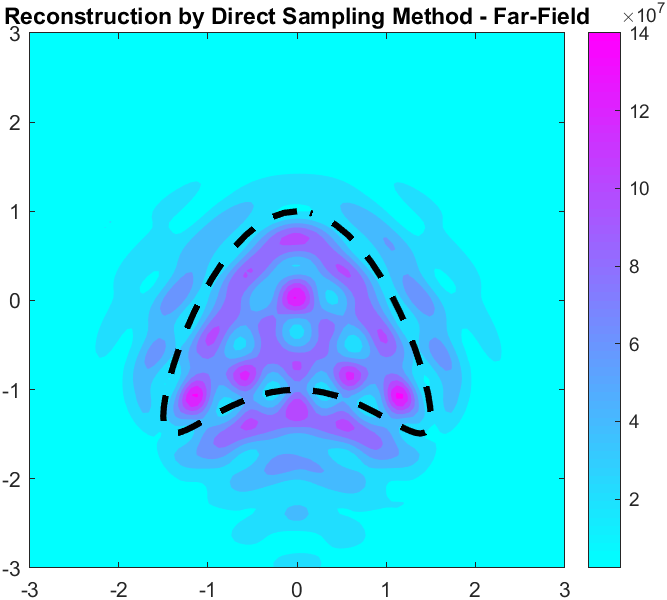}
        \caption{ Reconstruction of the kite shape via far--field data with $15\%$ noise (left) and $20\%$ noise (right), each with $p=2$.}
        \label{aicbc_reconst_kite-15,20}
    \end{figure}
\begin{example}
    We now provide four reconstructions for the peanut shape via the Cauchy data with $5\%$ noise, $10\%$ noise, $15\%$ noise, and $20\%$ noise, i.e. $\delta=0.05$, $\delta=0.1$, $\delta=0.15$, and $\delta=0.2$, respectively.
\end{example}
\begin{figure}[H]
        \centering 
        \includegraphics[scale=0.50]{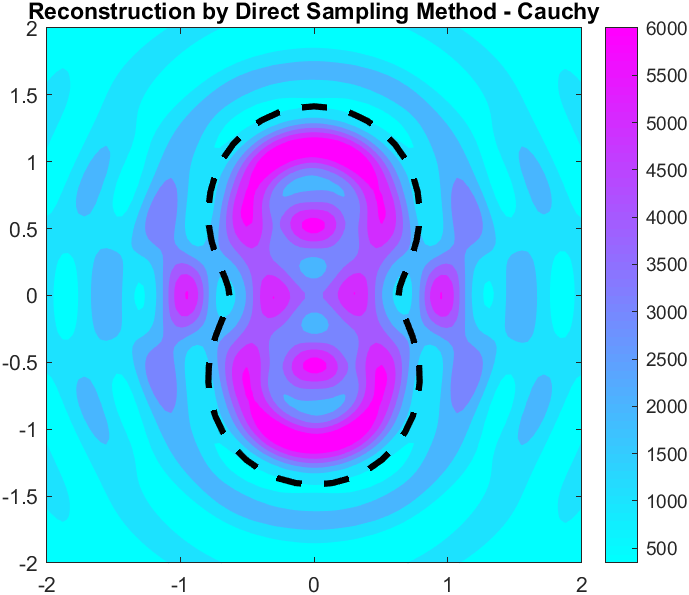} \hspace{0.2in} \includegraphics[scale=.50]{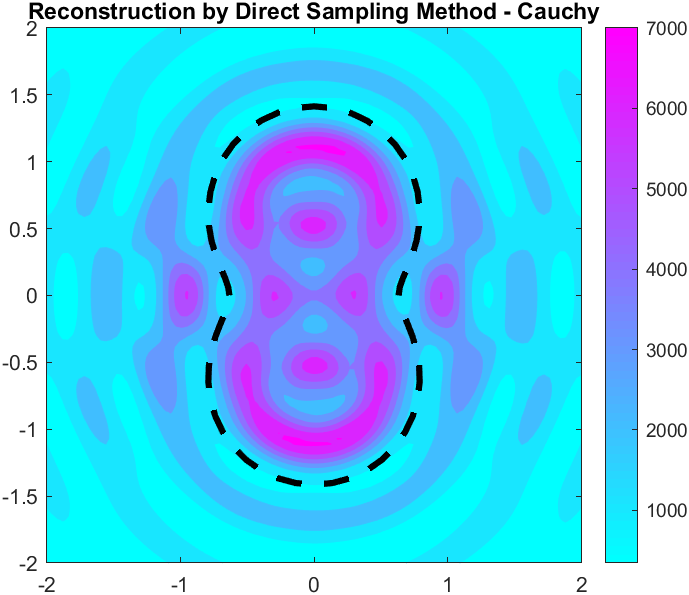}
        \caption{ Reconstruction of the peanut shape via Cauchy data with $5\%$ noise (left) and $10\%$ noise (right), each with $p=3$.}
        \label{aicbc_reconst_peanut-5,10}
    \end{figure}
\begin{figure}[H]
        \centering 
        \includegraphics[scale=0.50]{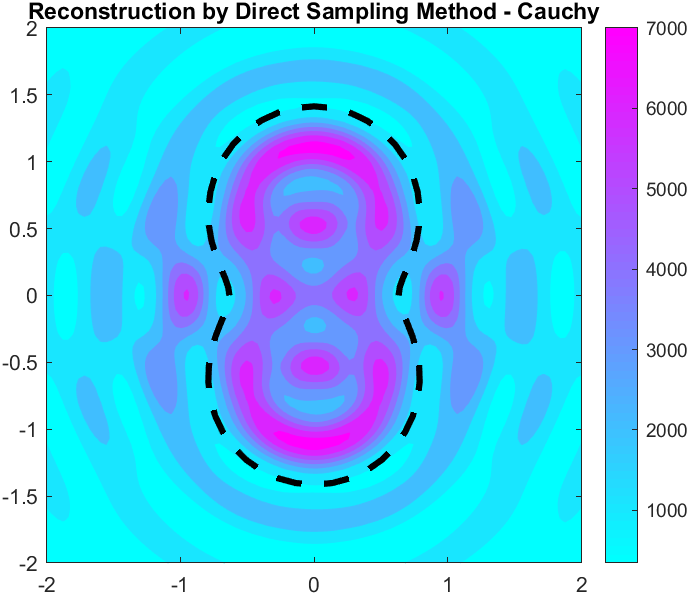} \hspace{0.2in} \includegraphics[scale=.50]{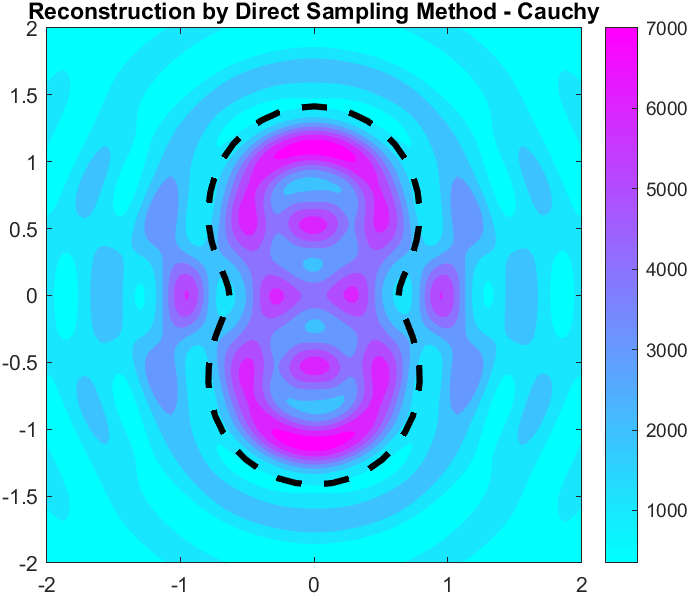}
        \caption{ Reconstruction of the peanut shape via Cauchy data with $15\%$ noise (left) and $20\%$ noise (right), each with $p=3$.}
        \label{aicbc_reconst_peanut-15,20}
    \end{figure}

From our examples, we have faithful reconstructions for the circular regions under either set of assumptions on the coefficients given in Theorem \ref{aicbc-thm-T=S+K}. We also have faithful reconstructions for the non-circular regions, and we see that the method is resilient to added noise in these examples. With this, we see that our direct sampling methods are applicable to reconstructing anisotropic scatterers with a conductive boundary. We have seen that the reconstructions are robust with respect to noisy data which has been proven in \cite{liem-paper-rgf}. 

\section{Conclusions} \label{aicbc-section-conclusion}
In this paper, we studied multiple direct sampling methods for an anisotropic material with a conductive boundary on an unbounded domain. We obtained the necessary symmetric factorization of the far--field operator to apply a direct sampling method via the far--field data. We then derived a direct sampling method using the Cauchy data by studying the reciprocity gap functional for the given material. Our main contribution was adopting the theory of these direct sampling methods for the anisotropic material under two different assumptions on its physical parameters. We then provided some numerical examples of using these methods on different circular and non-circular domains in two dimensions with various levels of noise. We used separation of variables and boundary integral equations to generate the data for these reconstructions. We see that these methods create faithful reconstructions of these domains that are resilient to noise. With that, there is still room for more extensive numerical tests of the direct sampling methods studied in this paper. Future directions for this project are to study the transmission eigenvalue problem \eqref{aicbc-TEVprob1}--\eqref{aicbc-TEVprob2} and the applicability of the direct sampling methods with partial aperture data. \\

\end{document}